\newtheorem{prop}{Proposition}
\newtheorem{theorem}{Theorem}
\newtheorem{defi}{Definition}
\newtheorem{coro}{Corollary}
\newtheorem{lemma}{Lemma}
\newtheorem{remark}{Remark}
\numberwithin{equation}{section}
\title{A structure-preserving multiscale solver for particle-wave interaction in non-uniform magnetized plasmas}
\author[*]{Kun Huang}
\author[$\dag$]{Irene M. Gamba}
\author[$\ddag$]{Chi-Wang Shu}
\affil[*]{Institute for Fusion Studies, University of Texas at Austin}
\affil[$\dag$]{Oden Institute for Computational Sciences and Engineering, University of Texas at Austin}
\affil[$\ddag$]{Division of Applied Mathematics, Brown University}
\date{}
\begin{document}
\maketitle

\begin{abstract}
    Particle-wave interaction is of fundamental interest in plasma physics, especially in the study of runaway electrons in magnetic confinement fusion. Analogous to the concept of photons and phonons, wave packets in plasma can also be treated as quasi-particles, called plasmons. To model the ``mixture" of electrons and plasmons in plasma, a set of ``collisional" kinetic equations has been derived, based on weak turbulence limit and the Wentzel-Kramers-Brillouin (WKB) approximation.

    There are two main challenges in solving the electron-plasmon kinetic system numerically. Firstly, non-uniform plasma density and magnetic field results in high dimensionality and the presence of multiple time scales. Secondly, a physically reliable numerical solution requires a structure-preserving scheme that enforces the conservation of mass, momentum, and energy.

    In this paper, we propose a struture-preserving multiscale solver for particle-wave interaction in non-uniform magnetized plasmas. The solver combines a conservative local discontinuous Galerkin (LDG) scheme for the interaction part with a trajectory averaging method for the plasmon Hamiltonian flow part. Numerical examples for a non-uniform magnetized plasma in an infinitely long symmetric cylinder are presented. It is verified that the LDG scheme rigorously preserves all the conservation laws, and the trajectory averaging method significantly reduces the computational cost.
\end{abstract}

\section{Introduction}\label{intro}
The interaction between charged particles and waves is a critical aspect of plasma behavior, which occurs across multiple time scales. The wave frequency in plasmas is extremely high. If one directly simulates the system using the Vlasov–Maxwell equations, the time step of the numerical method must be smaller than the wave period. In order to simulate waves within reasonable computational cost, they are modeled as quasi-particles (plasmons) under WKB approximation\cite{breizman_physics_2019}, which have Hamilton's equations of motion in the phase space. At the same time, their interaction with particles are modeled by the quasilinear theory \cite{vedenov1961nonlinear,drummond1962non, landau1946vibrations}, which renders two additional collision-like integral-differential operators. It turns out that, such interaction results in diffusion of electrons and reaction of plasmons. For a comprehensive introduction of the model, we refer the readers to Chapter 23 of Thorne \& Blandford's book \cite{thorne2017modern}.

The two components of this model—the Hamiltonian flow of plasmons and the electron-plasmon interaction—give rise to two numerical challenges. 

The first challenge is on efficiency. In practice, we are primarily interested in how the electron distribution is affected by the electron–plasmon interaction, rather than the detailed phase-space distribution of the plasmons themselves. However, the Hamiltonian flow of plasmons involves the smallest time scale in the entire model. As a result, solving this flow directly would waste significant computational resources on information that is not of interest. Fortunately, the Hamiltonian of the plasmons is time-independent, which means their trajectories are fixed. This allows us to eliminate the Hamiltonian flow term from the equation through trajectory averaging.

The trajectory average technique \cite{bostan2010transport} has been successful in tackling problems with simple advection field lines. For example, the gyro-average of Vlasov equations, where the average operator is simply integration along circles. For more complicated problems, there is the ray-tracing method \cite{aleynikov2015stability}, where sample trajectories are calculated with the Hamilton’s equation, and numerical averaging is performed on these trajectories. However, such approach is expensive, not conservative, and not compatible with finite element or finite difference solvers. To address these issues, we conducted an in-depth study of the trajectory-averaging method in the language of functional analysis. It turns out that, the trajectory-averaging operator is essentially an orthogonal projection onto the null space of the advection (Hamiltonian flow) operator \cite{bostan2010transport}. Based on this perspective, we propose not to discretize the trajectory-averaging operator directly. Instead, we first discretize the null space of the advection operator and then construct the corresponding projection operator.

The second challenge is on accuracy. As a reduced model stemming of the Vlasov-Maxwell system, our kinetic system of electrons and plasmons inherits the conservation properties, which relies on the delicate structure of the ``collision" operators and the Poisson bracket associated to plasmon dynamics. The goal of this paper is to develop a numerical solver that preserves the structure of the underlying physical systems, particularly for highly magnetized plasmas with non-uniform density profiles. The solver is designed to respect the conservation of mass, momentum, and energy while capturing the kinetic evolution of both particles and waves (electrons and plasmons).

In 2023, Huang et. al. \cite{huang2023conservative} proposed a structure-preserving solver for quasilinear theory, in which a continuous Galerkin method was employed to discretize the electron diffusion operator. However, in practical applications \cite{breizman_physics_2019}, the electron kinetic equation involves many additional terms, notably convection terms. To ensure compatibility with such more general models, we extend the idea in \cite{huang2023conservative} and propose a novel discontinuous Galerkin version.

This paper is organized as follows. In section \ref{sec:formulation} we describe the set up of the problem. After that we derive the weak form of trajectory-averaged equation in section \ref{sec:aver} and the connection-proportion algorithm in section \ref{sec:trajbundle}. The conservative LDG method for quasilinear theory will be introduced in section \ref{sec:ldg}. Implementation and complexity analysis will be discussed in section \ref{sec:implement}. Section \ref{sec:cylinder_results} contains the numerical results.

\section{The kinetic system for particles and plasmons} \label{sec:formulation}

Consider an infinitely long cylinder $\Omega_{x}=(0,R)\times(0,2\pi)\times\mathbb{R}$. At any given location $(r,\phi,z)\in\Omega_{x}$, there is a set of local
orthonormal basis $(\mathbf{e}_{z},\mathbf{e}_{r},\mathbf{e}_{\phi})$.

Suppose a plasma is confined in the cylinder, embedded in a magnetic field along the z-axis, $\mathbf{B}(\mathbf{x})=B(r)\mathbf{e}_{z}$.

We focus on the $z,\phi$-symmetric case for simplicity, which means any function $g(\mathbf{x})$ depends only on $r$.

The momentum $\mathbf{p}$ of a particle at $(r,\phi,z)$ can be decomposed as $\mathbf{p}=p_{z}\mathbf{e}_{z}+p_{r}\mathbf{e_{r}}+p_{\phi}\mathbf{e}_{\phi} \in \mathbb{R}_{p}^{3}$.  Define $p_{\parallel}=\frac{\mathbf{B}}{|\mathbf{B}|}\cdot\mathbf{p}$ and $p_{\perp}=\left\vert\left(I-\frac{\mathbf{B}}{|\mathbf{B}|}\otimes\frac{\mathbf{B}}{|\mathbf{B}|}\right)\cdot\mathbf{p}\right\vert$. Then for highly magnetized plasma, the particle probability density function $f(\mathbf{p},\mathbf{x},t)=f(p_{\parallel},p_{\perp},r,t)$.

The momentum  $\mathbf{k}$ of a plasmon (the wave vector of a wave packet) can also be decomposed as $\mathbf{k}=k_{z}\mathbf{e}_{z}+k_{r}\mathbf{e_{r}}+k_{\phi}\mathbf{e}_{\phi} \in  \mathbb{R}_{k}^{3}$. Define $q_{\phi} = k_{\phi}\cdot r$, then $(k_{r},q_{\phi},k_{z})$ and $(r, \phi, z)$ are a set of canonical coordinates. Given $z,\phi$-symmetry, the plasmon probability density function $N(\mathbf{k}, \mathbf{x}, t) = N(k_{r}, q_{\phi}, k_{z}, r, t)$.

\bigskip
The following system governs the evolution of particle \textit{pdf} $f(\mathbf{p}, \mathbf{x}, t)$ and plasmon \textit{pdf} $N(\mathbf{k}, \mathbf{x}, t)$,
\begin{equation} \label{kineticsytem}
\begin{cases}
    \partial_{t}f +v_{z}\partial_{z}f&=\nabla_{p}\cdot(D[N]\cdot\nabla_{p}f),\\
    \partial_{t}N +\left\{\omega,N\right\} &=\Gamma[f]N,
\end{cases}
\end{equation}
where the quasilinear diffusion term and the reaction term accounts for electron-plasmon interaction \cite{breizman_physics_2019}. The advection term $v_{z}\partial_{z}f$ comes from the gyro-averaged Vlasov equation, it vanishes since $f$ does not depend on $z$. In addition, the Poisson bracket $\left\{ \omega,N\right\} =\frac{\partial N}{\partial r}\frac{\partial\omega}{\partial k_{r}} - \frac{\partial\omega}{\partial r}\frac{\partial N}{\partial k_{r}}$ is a result of WKB approximation. Denote it as $\mathcal{T}N$, then obviously the operator $\mathcal{T}$ is anti-symmetric. 

The transport of plasmons is fully determined by the Hamiltonian $\omega(\mathbf{k}, \mathbf{x})$, i.e. the dispersion relation of waves in plasma. For a detailed discussion of wave dispersion relation in homogeneous plasmas, we refer the readers to section 3.2 of \cite{huang2023numerical}. As have been shown in \cite{huang2023numerical}, given the plasma frequency $\omega_{pe}$ and electron gyro-frequency $\omega_{ce}$, wave frequency $\omega$ is a function of wave-vector $\mathbf{k}$. Note that the plasma frequency $\omega_{pe}$ is proportional to the square root of electron number density $n_{e}$ and the gyro-frequency $\omega_{ce}$ is proportional to the
background magnetic field $B$. In this paper we assume that both $B$ and $n_{e}$ are dependent on the radial variable $r$. Therefore the dispersion relation should be $\omega = \omega(\mathbf{k}; \omega_{pe}(\mathbf{x}), \omega_{ce}(\mathbf{x})) = \omega(\mathbf{k}; \mathbf{x})$.

Test the equations with $\varphi(\mathbf{p},\mathbf{x})$ and $\eta(\mathbf{k},\mathbf{x})$ to obtain the following weak form,
\begin{equation}
    \iint_{px}\varphi\partial_{t}f+\iint_{kx}\eta\partial_{t}N=\iint_{kx}N\mathcal{T}\eta+\iiint_{pkx}\left(\eta\mathcal{L}E-\omega\mathcal{L}\varphi\right)\mathcal{L}fN\mathcal{B},
\end{equation}

where $E(\mathbf{p})$ is the kinetic energy of a single particle,

\begin{equation*}
    E(\mathbf{p}) = \gamma(\mathbf{p})mc^2 = \sqrt{m^2c^4 + p^2c^2},
\end{equation*}

the directional differential operator $\mathcal{L}$ is defined as,

\begin{equation}\label{operatorL}
    \mathcal{L}g\coloneqq k_{\parallel}\frac{\partial E}{\partial p_{\perp}}\frac{p}{p_{\perp}}\frac{\partial g}{\partial p_{\parallel}}+\left(\omega-k_{\parallel}\frac{\partial E}{\partial p_{\parallel}}\right)\frac{p}{p_{\perp}}\frac{\partial g}{\partial p_{\perp}},
\end{equation}

and the emission/absorption kernel $\mathcal{B}$ reads,
\begin{equation}\label{kernel}
    \mathcal{B}(\mathbf{p},\mathbf{k},\mathbf{x})\coloneqq\sum_{l}\frac{1}{\omega^{2}(\mathbf{k},\mathbf{x})}U_{l}(\mathbf{p},\mathbf{k},\mathbf{x})\delta(\omega(\mathbf{k},\mathbf{x})-k_{\parallel}v_{\parallel}-l\omega_{c}(\mathbf{x})/\gamma(\mathbf{p})).
\end{equation}

\section{Trajectorial average}\label{sec:aver}
As can be observed in Equation(\ref{kineticsytem}), there are three different time scales associated with the model, $\tau_\text{diff}$, $\tau_\text{adv}$ and $\tau_\text{reac}$. According to Kiramov and Breizman \cite{kiramov2021reduced}, the advection process is much faster than the other two, $\tau_\text{adv} \ll \tau_\text{diff} \sim \tau_\text{reac}$. In practice, particle-wave interaction is of interest, rather than plasmon advection. Hence it is reasonable to eliminate the Poisson bracket term through trajectorial averaging. 


\subsection{The Liouville-reaction equation}

Consider the Liouville-reaction equation 
\begin{equation}\label{LiouvilleReaction}
    \frac{\partial N(k,x,t)}{\partial t}+\frac{1}{\varepsilon}\left\{ N(k,x,t),H(k,x)\right\} =\Gamma(k,x,t)N(k,x,t),
\end{equation}
on a bounded connected domain $\Omega_{b}$, with $H(k,x) \equiv H_{b}$ and $\nabla H \neq 0 $ on $\partial \Omega_{b}$. Then the problem is well-posed without any boundary condition.



We use the Hilbert expansion to obtain an averaged equation \cite{bostan2010transport}.

To begin with, define the advection operator $\mathcal{T}$ as follows,

\begin{equation}
    \mathcal{T}g\coloneqq\nabla_{k}H\cdot\nabla_{x}g-\nabla_{x}H\cdot\nabla_{k}g.
\end{equation}

Then Equation(\ref{LiouvilleReaction}) can be written as,
\begin{equation}\label{adv-reaction}
    \partial_{t}N+\frac{1}{\varepsilon}\mathcal{T}N=\Gamma N.
\end{equation}

In addition, assume that $N(k,x,t)$ has the following Hilbert expansion,
\begin{equation}\label{HilbertExpansion}
    N=N_{0}+\varepsilon N_{1}+\varepsilon^{2}N_{2}+\cdots.
\end{equation}

Substitute Equation(\ref{HilbertExpansion}) into Equation(\ref{adv-reaction}) and collect the terms according to the power of $\varepsilon$.

The $\varepsilon^{-1}$ term being zero yields that,
\begin{equation}\label{ep-1}
    \mathcal{T}N_{0}=0.
\end{equation}

The $\varepsilon^{0}$ term being zero yields that,
\begin{equation}\label{ep0}
    \partial_{t}N_{0}+\mathcal{T}N_{1}=\Gamma N_{0}.
\end{equation}

Denote the orthogonal projection from $L^{2}(\Omega_{b})$ onto $L^{2}(\Omega_{b}) \cap \text{ker} \mathcal{T}$ as $\mathcal{P}$, then $\mathcal{P}u \in L^{2}(\Omega_{b}) \cap \text{ker}\mathcal{T}$, and $\left(\mathcal{P}u,\eta\right)_{\Omega_{b}}=\left(u,\eta\right)_{\Omega_{b}}$ for any $\eta \in L^{2}(\Omega_{b}) \cap \text{ker} \mathcal{T}$.

Apply the projection $\mathcal{P}$ on both sides of Equation(\ref{ep0}) to obtain
\begin{equation*}
    \partial_{t}\mathcal{P}N_{0}+\mathcal{P}\mathcal{T}N_{1}=\mathcal{P}\left(\Gamma N_{0}\right).
\end{equation*}

Equation(\ref{ep-1}) renders that $\mathcal{P}N_{0} = N_{0}$. In addition, note that the advection operator $\mathcal{T}$ is skew-symmetric, therefore $\mathcal{P}\mathcal{T}N_{1} = 0$. It follows that
\begin{equation}\label{AverEq}
    \partial_{t}N_{0}=\mathcal{P}\left(\Gamma N_{0}\right).
\end{equation}

We call $\mathcal{P}$ the trajectorial averaging operator, and Equation(\ref{AverEq}) is the averaged equation.

Since the projection operator $\mathcal{P}$ is self-adjoint, for $\eta \in L^{2}(\Omega_{b}) \cap \text{ker} \mathcal{T}$, we have
\begin{equation*}
    \left(\mathcal{P}\left(\Gamma N_{0}\right),\eta\right)_{\Omega_{b}}=\left(\Gamma N_{0},\mathcal{P}\eta\right)_{\Omega_{b}}=\left(\Gamma N_{0},\eta\right)_{\Omega_{b}}.
\end{equation*}

Therefore the weak form of the averaged problem can be stated as follows:

Find an $N_{0} \in  L^{2}(\Omega_{b}) \cap \text{ker} \mathcal{T}$ such that 
\begin{equation}\label{weakAverEq}
    \left(\partial_{t}N_{0},\eta\right)_{\Omega_{b}}=\left(\Gamma N_{0},\eta\right)_{\Omega_{b}},
\end{equation}
for any $\eta \in L^{2}(\Omega_{b}) \cap \text{ker} \mathcal{T}$.

\subsection{The averaged kinetic system}  

Since $\tau_\text{adv} \ll \tau_\text{diff}$, the diffusion equation for particle \textit{pdf} in (\ref{kineticsytem}) can be approximated with
\begin{equation*}
    \partial_{t}f=\nabla_{p}\cdot(D[N_{0}]\cdot\nabla_{p}f).
\end{equation*}

Testing the averaged system with $\varphi(\mathbf{p}, \mathbf{x}) \in \mathcal{G} = H^{1}(\mathbb{R}^{3}_{p})\otimes L^2(\Omega_{x})$ and $\eta(\mathbf{k}, \mathbf{x}) \in \mathcal{N}= \text{ker}\mathcal{T} \cap L^{2}(\mathbb{R}^{3}_{k}\times\Omega_{x})$ yields
\begin{equation}\label{aversys}
    \iint_{px}\varphi\partial_{t}f+\iint_{kx}\eta\partial_{t}N=\iiint_{pkx}\left(\eta\mathcal{L}E-\omega\mathcal{L}\varphi\right)\mathcal{L}fN\mathcal{B},
\end{equation}
where we have used $N$ instead of $N_{0}$ since the higher order terms in the Hilbert expansion (\ref{HilbertExpansion}) will be ignored in the rest of the paper.

\bigskip
Since the absorption/emission kernel $\mathcal{B}(\mathbf{p}, \mathbf{k}, \mathbf{x})$ as defined in Equation(\ref{kernel}) contains Dirac delta, for the purpose of either modeling or numerical implementation, it is
usually replaced with its approximation $\mathcal{B}_{\varepsilon}(\mathbf{p},\mathbf{k}, \mathbf{x})$. In our previous paper \cite{huang2023conservative}, it has been proved that our choice of directional differential operator $\mathcal{L}$ in Equation(\ref{operatorL}) guarantees unconditional conservation. In the following theorem we show that the unconditional conservation property is preserved even after trajectorial average.

    \begin{theorem}[unconditional conservation] \label{conserv}

If $f(\mathbf{p}, \mathbf{x}, t)$ and $N(\mathbf{k}, \mathbf{x}, t)$ solve Equation(\ref{aversys}) with emission/absorption kernel being replaced by $\mathcal{B}_{\varepsilon}$, then for any $\mathcal{B}_{\varepsilon}$ we have the following conservation laws,

\begin{itemize}
    \item Mass Conservation
    \begin{equation*}
        \partial_{t}\mathcal{M}_{tot}=\partial_{t}\left((f,1)_{px}+(N,0)_{kx}\right)=0,
    \end{equation*}
    
    \item Momentum Conservation
    \begin{equation*}
        \partial_{t}\mathcal{P}_{z,tot}=\partial_{t}\left((f,p_{z})_{px}+(N,k_{z})_{kx}\right)=0,
    \end{equation*}
    \item Energy Conservation
    \begin{equation*}
        \partial_{t}\mathcal{E}_{tot}=\partial_{t}\left((f,E)_{px}+(N,\omega)_{kx}\right)=0.
    \end{equation*}
    
\end{itemize}
\end{theorem}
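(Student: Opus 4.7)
The plan is to choose, for each conservation law, a pair of test functions $(\varphi,\eta)$ with two properties: (i) $\eta\in\mathcal{N}=\ker\mathcal{T}\cap L^2$, so that the pair is admissible in (\ref{aversys}), and (ii) the integrand $\eta\,\mathcal{L}E-\omega\,\mathcal{L}\varphi$ vanishes pointwise on $\mathbb{R}^3_p\times\mathbb{R}^3_k\times\Omega_x$. Whenever (ii) holds, the right-hand side of (\ref{aversys}) is identically zero for any choice of $\mathcal{B}_\varepsilon$, which is exactly what ``unconditional'' refers to: conservation does not rely on any special property of the regularized delta.

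For mass I would take $\varphi=1$ and $\eta=0$. Then $\eta\in\ker\mathcal{T}$ trivially, and $\mathcal{L}(1)=0$ directly from (\ref{operatorL}) since $\mathcal{L}$ has no zeroth-order part. The left-hand side collapses to $\partial_t(f,1)_{px}$. For energy I would take $\varphi=E$ and $\eta=\omega$: admissibility holds because $\omega$ is the Hamiltonian, so $\mathcal{T}\omega=\{H,H\}=0$, and the integrand $\omega\,\mathcal{L}E-\omega\,\mathcal{L}E$ vanishes trivially.

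For momentum I would take $\varphi=p_z=p_\parallel$ and $\eta=k_z=k_\parallel$ (using $\mathbf{B}\parallel\mathbf{e}_z$). Admissibility of $k_z$ uses the $z,\phi$-symmetry: in canonical coordinates $(k_r,q_\phi,k_z;r,\phi,z)$ the Hamiltonian $H=\omega(\mathbf{k};r)$ is independent of $z$, so $\mathcal{T}(k_z)=-\partial_z H=0$. The pointwise identity then reduces to a short computation with (\ref{operatorL}): the two cross-terms in $\mathcal{L}E$ involving $k_\parallel\,\frac{\partial E}{\partial p_\perp}\frac{\partial E}{\partial p_\parallel}$ cancel, leaving $\mathcal{L}E=\omega\,\frac{p}{p_\perp}\frac{\partial E}{\partial p_\perp}$, while $\mathcal{L}p_\parallel=k_\parallel\,\frac{p}{p_\perp}\frac{\partial E}{\partial p_\perp}$. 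Hence $k_z\,\mathcal{L}E-\omega\,\mathcal{L}p_z\equiv 0$ pointwise.

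The main obstacle, though modest, is this last algebraic cancellation: the specific prefactors in the definition of $\mathcal{L}$ were engineered in \cite{huang2023conservative} precisely to enforce it, and verifying it is where the structure of the model becomes visible. The remaining ingredients ($k_z,\omega\in\ker\mathcal{T}$) are immediate from the Poisson structure and the $z,\phi$-symmetry of $H$, and no integration by parts or boundary manipulation is needed, because conservation emerges from a pointwise identity inside the triple integral, before any integration is carried out.
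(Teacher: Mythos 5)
Your proposal is correct and follows essentially the same route as the paper: mass and energy are immediate (with $\omega\in\ker\mathcal{T}$ giving admissibility of $\eta=\omega$), and momentum reduces to the pointwise identity $k_{z}\mathcal{L}E-\omega\mathcal{L}p_{z}=0$, obtained from $\mathcal{L}E=\omega\frac{p}{p_{\perp}}\frac{\partial E}{\partial p_{\perp}}$, $\mathcal{L}p_{\parallel}=k_{\parallel}\frac{p}{p_{\perp}}\frac{\partial E}{\partial p_{\perp}}$ and $k_{\parallel}=k_{z}$, exactly as in the paper's argument. The observation that the cancellation is pointwise inside the triple integral, hence independent of $\mathcal{B}_{\varepsilon}$, is precisely the content of ``unconditional'' here.
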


\begin{proof}
We consider the conservation laws one by one. 

Mass conservation is trivial. The energy conservation law is also trivial, since $\omega \in \text{ker}\mathcal{T}$, and
\begin{equation*}
    \omega\mathcal{L}E-\omega\mathcal{L}E=0.
\end{equation*}

For momentum conservation, note that $k_{z}\in \text{ker}\mathcal{T}$, hence we have,
\begin{equation*}
\partial_{t}\mathcal{P}_{z,tot}=\iiint_{pkx}\left(k_{z}\mathcal{L}E-\omega\mathcal{L}p_{z}\right)\mathcal{L}fN\mathcal{B}_{\varepsilon}
\end{equation*}
Recall the definition
\begin{equation*}
\mathcal{L}g\coloneqq k_{\parallel}\frac{\partial E}{\partial p_{\perp}}\frac{p}{p_{\perp}}\frac{\partial g}{\partial p_{\parallel}}+\left(\omega-k_{\parallel}\frac{\partial E}{\partial p_{\parallel}}\right)\frac{p}{p_{\perp}}\frac{\partial g}{\partial p_{\perp}},
\end{equation*}
since $\mathbf{B} \parallel \mathbf{e}_{z}$, 
\begin{equation*}
    k_{z}\mathcal{L}E-\omega\mathcal{L}p_{z}=k_{z}\omega\frac{p}{p_{\perp}}\frac{\partial E}{\partial p_{\perp}}-\omega k_{\parallel}\frac{\partial E}{\partial p_{\perp}}\frac{p}{p_{\perp}}=0.
\end{equation*}
It follows that $\partial_{t}\mathcal{P}_{z,tot}=0$ regardless of $\mathcal{B}_{\varepsilon}$.
    
\end{proof}

\section{Trajectory bundle: definition and construction}\label{sec:trajbundle}

To discretize Equation(\ref{aversys}), it is necessary to construct a series of finite dimensional spaces approximating the test space $\text{ker}\mathcal{T} \cap L^{2}(\mathbb{R}^{3}_{k}\times\Omega_{x})$. Hence in this section we introduce the concept of trajectory bundle, and propose the connnection-proportion algorithm to construct basis functions.

\subsection{Trajectory bundles and their properties}

Consider a particle moving in phase space, denote the trajectory generated by initial state $(\mathbf{k}_{0},\mathbf{x}_{0})$ as $s(\mathbf{k}_{0},\mathbf{x}_{0})$, then the Hamiltonian $H(\mathbf{k}, \mathbf{x})$ must be a constant along the trajectory,
\begin{equation*}
    H(\mathbf{k},\mathbf{x})=H(\mathbf{k}_{0},\mathbf{x}_{0}),\forall(\mathbf{k},\mathbf{x})\in S(\mathbf{k}_{0},\mathbf{x}_{0}).
\end{equation*}

Define the trajectory bundle generated by set $\mathcal{A}$ as $S(\mathcal{A})=\cup_{(\mathbf{k}_{0},\mathbf{x}_{0})\in\mathcal{A}}S(\mathbf{k}_{0},\mathbf{x}_{0})$. The set $S(\mathcal{A})$ is the union of all the trajectories generated by initial states inside $\mathcal{A}$, therefore they share the same range of Hamiltonian,
\begin{equation*}
    H(S(\mathcal{A}))=H(\mathcal{A}).
\end{equation*}

We present the above definition in order to give the readers some physics intuition. The following approach is more convenient for implementation.

\begin{defi}{Trajectory bundles}

    Given $H_{a}$ and $H_{b}$ such that $\inf_{\Omega_{b}}H(k,x)\leq H_{a}<H_{b}\leq\sup_{\Omega_{b}}H(k,x)$. There exists a family of subsets of $\Omega_{b}$ with finite cardinality such that
    \begin{enumerate}
        \item The union of these subsets are the inverse image of the interval $\left(H_{a},H_{b}\right)$,
        \begin{equation}
            \cup_{S\in\mathcal{F}}S=\left\{ \left(k,x\right):H(k,x)\in\left(H_{a},H_{b}\right)\right\}.
        \end{equation}
        \item Any subset $S\in \mathcal{F}$ is connected.
        \item If $S_{1} \in \mathcal{F}$, $S_{2} \in \mathcal{F}$ and $S_{1} \neq S_{2}$, then $\overline{S_{1}} \cap \overline{S_{2}}$ is either empty or a countable set of points in $\Omega_{b}$.
    \end{enumerate}

    Each subset $S \in \mathcal{F}$ is called a trajectory bundle. And $\mathcal{F}$ is called the family of trajectory bundles generated by the interval $\left(H_{a},H_{b}\right)$.

\end{defi}

The basis function induced by a trajectory bundle $S$ is defined as 
    \begin{equation}
        \mathbf{1}_{S}=\begin{cases}
        1, & (k,x)\in S\\
        0, & \text{otherwise}
        \end{cases}
    \end{equation} 

Obviously $\{\mathbf{1}_{S}, H\} = 0$, i.e. $\mathbf{1}_{S} \in \text{ker}\mathcal{T} \cap L^{2}(\Omega_{b})$.

\bigskip
Given a segmentation $\sup_{\Omega_{b}}H(k,x)=H_{0}\leq H_{1}<H_{2}<\cdots<H_{N-1}\leq H_{N}=\inf_{\Omega_{b}}H(k,x)$, each interval $\left(H_{j},H_{j+1}\right)$ generates a family of trajectory bundles $\mathcal{F}_{j}$. Denote the basis functions induced by all these trajectory bundles as $\eta_{i}$, $i=1, \cdots, M$. They have the following properties,

\begin{enumerate}
    \item Partition of unity,
    \begin{equation}
    \cup_{i=1}^{M}\eta_{i}(k,x)=1,\ \forall (k,x) \in \Omega_{b}.
    \end{equation}
    
    \item Orthogonality
    \begin{equation}
        \left(\eta_{i},\eta_{j}\right)_{\Omega_{b}}=\delta_{ij}\left(\eta_{i},\eta_{i}\right)_{\Omega_{b}},
    \end{equation}
    where $\delta_{ij}$ is the Kronecker delta.

\end{enumerate}

Denote $\text{span}\{\eta_{i}\}$ as $X_{h}$. We seek for $N_{h} \in X_{h}$ such that
\begin{equation}\label{discrete_reaction}
    \left(\partial_{t}N_{h},\eta_{h}\right)=\left(\gamma N_{h},\eta_{h}\right),\forall\eta_{h}\in X_{h}.
\end{equation}


\subsection{The connection-proportion algorithm}
In what follows, we assume that the domain $\Omega_b \subset \mathbb{R}^{2} $ is partitioned into triangular meshes $ \{V_{i} \}$, and the Hamiltonian $H(k,x) \in C(\Omega_{b})$ is piecewise linear on the mesh. Under such assumption, the trajectory bundles are areas between polygons - the level sets of $H$. The challenge here is to design a data structure which stores all the necessary information associated to the trajectory bundles.

We propose the connection-proportion algorithm to construct the family $\mathcal{F}$ of trajectory bundles generated by interval $I = (H_{a}, H_{b})$.  The key is to distinguish different connected components of the inverse image $H^{-1}(I)$. To avoid ambiguity caused by saddle points, we assume that $H_{a}$ and $H_{b}$ are not equal to any node value of the piecewise linear Hamiltonian. 

Firstly, let us introduce some preliminary concepts.

\begin{defi}[minimal triangle cover]
    For $S\in \mathcal{F}$, define
    \begin{equation*}
        \mathcal{R}_{min}(S) \coloneqq \left\{ V_{i}:V_{i}\cap S\neq\emptyset\right\}.
    \end{equation*}
    It can be easily verified that for any triangle cover $\mathcal{R}$ such that 
    \begin{equation*}
        S\subset\cup_{V_{i}\in\mathcal{R}}\overline{V_{i}},
    \end{equation*}
    we have $\mathcal{R}_{min} \subset \mathcal{R}$. Hence we call $\mathcal{R}_{min}(S)$ the minimal triangle cover of $S$.
\end{defi}

The following proposition shows that the minimal triangle cover replicates the topological relation between the corresponding trajectory bundles. 
\begin{prop} 
    If $S_{i}, S_{j}\in \mathcal{F}$ and $S_{i} \neq S_{j}$, then 
    \begin{equation*}
        \mathcal{R}_{min}(S_{i}) \cap \mathcal{R}_{min}(S_{j}) = \emptyset.
    \end{equation*}
\end{prop}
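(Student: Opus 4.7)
The plan is to argue by contradiction. Suppose some triangle $V$ lies in both $\mathcal{R}_{min}(S_{i})$ and $\mathcal{R}_{min}(S_{j})$, so that both $V\cap S_{i}$ and $V\cap S_{j}$ are nonempty. The strategy is to show that the ``slice'' $W \coloneqq V \cap H^{-1}((H_{a},H_{b}))$ is a single connected piece that therefore cannot be split between two distinct trajectory bundles.

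The first step is to exploit the piecewise-linear structure of $H$. Since $H$ restricted to the closed triangle $V$ is affine, the set $\{H_{a} < H < H_{b}\}$ cuts $V$ along two parallel lines, and $W$ is the intersection of the convex triangle $V$ with the open convex slab determined by these two lines. As an intersection of convex sets $W$ is convex, hence connected; and because $H_{a}, H_{b}$ are not nodal values, the assumption $V\cap S_{i}\neq\emptyset$ guarantees that $W$ has nonempty interior.

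Next I would invoke the defining properties of $\mathcal{F}$. By property~(1), $W \subset \bigcup_{S\in\mathcal{F}} S$, and by property~(3), any two distinct bundles meet only in a countable set of boundary points. Passing to the interior of $W$ (a two-dimensional open connected set) eliminates this countable ``seam'': if $\mathrm{int}(W)$ met two distinct bundles $S_{i}, S_{j}$, then $\mathrm{int}(W) \cap S_{i}$ and $\mathrm{int}(W)\cap S_{j}$ would be nonempty disjoint relatively-open subsets of $\mathrm{int}(W)$ whose union is $\mathrm{int}(W)$ minus an at-most countable set of points, contradicting the connectedness of $\mathrm{int}(W)$. Hence $\mathrm{int}(W)$ lies entirely in a single bundle $S$. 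Combined with $V\cap S_{i}\neq\emptyset$ and $V\cap S_{j}\neq\emptyset$ (which force $\mathrm{int}(W)\subset S_{i}$ and $\mathrm{int}(W)\subset S_{j}$, since the only portion of $V$ available to a bundle is inside $W$), this forces $S_{i} = S_{j}$, contradicting our assumption.

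The main obstacle is the bookkeeping in the last step: the abstract definition of $\mathcal{F}$ permits bundles to share boundary points, so one cannot simply identify bundles with connected components of an open set and invoke disjointness directly. The clean way around this is to work with $\mathrm{int}(W)$ rather than $W$ itself and to note that a countable set cannot disconnect a planar open connected set — a standard planar topology fact. Everything else (convexity of $W$, nonemptiness of $\mathrm{int}(W)$, the implication from $V\cap S\neq\emptyset$ to $\mathrm{int}(W)\subset S$) reduces to elementary observations about affine functions on triangles.
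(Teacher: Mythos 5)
The paper states this proposition without proof, so there is no official argument to compare against; I am judging yours on its own merits. Your central observation---that $H$ is affine on each triangle $V$, so $W \coloneqq V\cap H^{-1}((H_a,H_b))$ is the intersection of a convex cell with a convex open slab, hence convex and connected, and therefore cannot be distributed over two separated bundles---is exactly the right mechanism, and it is almost certainly why the authors felt entitled to omit the proof. (Note that the cells $V_i$ are open, as the covering condition in the definition of $\mathcal{R}_{min}$ is phrased with $\overline{V_i}$; so $W$ is already open and the distinction between $W$ and $\mathrm{int}(W)$ can be dropped.)

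Two steps are not watertight. First, $\mathrm{int}(W)\cap S_{i}$ is not relatively open as you assert: the bundles are not assumed to be open sets. The repair is the one you gesture at but do not carry out: set $D\coloneqq\bigcup_{k\neq l}\overline{S_{k}}\cap\overline{S_{l}}$, countable since $\mathcal{F}$ is finite; for $x\in(S_{i}\cap W)\setminus D$ one has $x\notin\overline{S_{l}}$ for every $l\neq i$, so a neighbourhood of $x$ inside $H^{-1}(I)$ avoids all other bundles and hence lies in $S_{i}$---this is what makes $(S_{i}\cap W)\setminus D$ relatively open in $W\setminus D$, after which the fact that a countable set cannot disconnect a planar open connected set applies. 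Second, and more seriously, your closing implication ``$V\cap S_{i}\neq\emptyset$ forces $\mathrm{int}(W)\subset S_{i}$'' does not follow from the stated axioms: property 3 only controls $\overline{S_{i}}\cap\overline{S_{j}}$ up to a countable set, so nothing forbids a degenerate bundle that meets $V$ only inside $D$. For instance $\mathcal{F}=\{\{x_{0}\},\,U\setminus\{x_{0}\}\}$ for an open connected $U$ satisfies all three axioms, yet any triangle containing $x_{0}$ lies in both minimal covers, so the proposition as literally stated is false. Your proof---and the proposition itself---therefore needs the implicit assumption that the bundles are the connected components of the open set $H^{-1}(I)$, which is what the connection--proportion algorithm actually produces; under that reading the bundles are open and pairwise disjoint, connectedness of $W$ places it in a single component at once, and your convexity argument closes with no countability bookkeeping at all.
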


\begin{coro}\label{distinguish}
    If $(k,x) \in \cup_{V_{i}\in\mathcal{R}_{min}(S)}\overline{V_{i}}$ and $H(k,x) \in I$, then
    \begin{equation*}
        (k,x) \in S.
    \end{equation*}
\end{coro}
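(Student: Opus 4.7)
The plan is to obtain this as an immediate consequence of the preceding proposition, using only the openness of $H^{-1}(I)$.

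By hypothesis $H(k,x) \in I$, so $(k,x) \in H^{-1}(I)$, which means $(k,x)$ lies in some bundle $S' \in \mathcal{F}$, i.e.\ in some connected component of $H^{-1}(I)$. I would argue $S' = S$ by contradiction. Fix a triangle $V_i \in \mathcal{R}_{min}(S)$ with $(k,x) \in \overline{V_i}$. Since $H$ is continuous and $I = (H_a, H_b)$ is open, $H^{-1}(I)$ is an open subset of $\Omega_b$; consequently its connected component $S'$ is also open, and we may choose an open neighborhood $U \subset S'$ of $(k,x)$. Because $(k,x) \in \overline{V_i}$, the neighborhood $U$ necessarily meets $V_i$ itself---whether $(k,x)$ lies in the interior of $V_i$ or on its boundary, every neighborhood of $(k,x)$ intersects the interior of $V_i$. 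This yields a point in $V_i \cap S'$, so $V_i \in \mathcal{R}_{min}(S')$. Thus $V_i \in \mathcal{R}_{min}(S) \cap \mathcal{R}_{min}(S')$, and if $S \neq S'$ this contradicts the preceding proposition. Hence $S = S'$ and $(k,x) \in S$.

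The only subtlety to watch is the boundary case where $(k,x)$ sits on an edge or vertex of $V_i$ that is also on the closure of some neighboring triangle $V_j \notin \mathcal{R}_{min}(S)$. A naive topological argument would leave open whether $(k,x)$ might belong instead to a bundle touching $V_j$; the openness of $S'$ is what lets us step from $(k,x)$ into the interior of $V_i$ while staying inside $S'$, removing the ambiguity uniformly. This is precisely where the standing assumption that $H_a$ and $H_b$ avoid all nodal values of the piecewise linear $H$ earns its keep: it keeps $H^{-1}(I)$ a nondegenerate open set whose components are cleanly separated and respect the triangulation, so no critical level set can pass through $(k,x)$ and destroy the neighborhood argument.
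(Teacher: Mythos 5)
Your argument is correct and is exactly the intended deduction: the paper states this as an immediate corollary of the disjointness proposition and offers no separate proof, and your use of the openness of the connected component $S'$ of $H^{-1}(I)$ to conclude $V_{i}\cap S'\neq\emptyset$ (hence $V_{i}\in\mathcal{R}_{min}(S)\cap\mathcal{R}_{min}(S')$, forcing $S=S'$) is the right way to dispose of the boundary case. Your closing remark slightly overstates the role of the nodal-value assumption here --- openness of $H^{-1}(I)$ and of its components needs only continuity of $H$, and the neighborhood step goes through without it; that assumption serves rather to keep saddle points off the levels $H_{a},H_{b}$ so that the family of bundles and the connection-matrix construction are unambiguous.
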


Corollary \ref{distinguish} actually points to the proper way of implementation: it is neither necessary nor sufficient to store the nodes of polygons, all we need is the minimal triangle cover. In other words, we have transformed a continuous problem in topology into a discrete problem in graph theory. Next, we propose the algorithm to construct the minimal triangle covers.

\begin{defi}[connection matrix]
    For a given interval $I = (H_{a}, H_{b})$, the connection matrix associated with $I$ is defined as 
    \begin{equation}\label{connectionmat}
        A_{ij}(I)=\begin{cases}
1, & \overline{V_{i}}\cap\overline{V_{j}}\cap H^{-1}(I)\neq\emptyset\\
0, & \text{otherwise}
\end{cases}
    \end{equation}
\end{defi}

The connection matrix renders a finite family of connected components $\{\mathcal{R}_{\nu} \}$. The following theorem shows that each of them corresponds to a trajectory bundle.

\begin{theorem}
    Suppose that $\mathcal{F}(I)$ is the family of trajectory bundles generated by interval $I=(H_{a}, H_{b})$ and $\{\mathcal{R}_{\nu} \}$ is the set of connected components determined by the connection matrix $A_{ij}(I)$ as defined in Equation(\ref{connectionmat}), then
    \begin{equation*}
        \{\mathcal{R}_{\nu}\}=\left\{ \mathcal{R}_{min}(S):S\in\mathcal{F}(I)\right\}. 
    \end{equation*}
\end{theorem}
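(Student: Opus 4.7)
The plan is to prove the set equality by establishing both inclusions, relying on two structural facts: each trajectory bundle $S \in \mathcal{F}(I)$ is an open, path-connected subset of the open set $H^{-1}(I)$, and on any triangle $V_i$ the restriction of $H$ is linear, so $V_i \cap H^{-1}(I)$ is the intersection of a convex triangle with the open strip $\{H_a < H < H_b\}$ and is therefore convex (possibly empty). These two facts will let me translate between path-connectivity in $H^{-1}(I)$ and graph-connectivity in the connection matrix.

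For the inclusion $\{\mathcal{R}_{min}(S) : S \in \mathcal{F}(I)\} \subset \{\mathcal{R}_\nu\}$, fix $S$ and show $\mathcal{R}_{min}(S)$ is exactly one connected component. Given $V_i, V_j \in \mathcal{R}_{min}(S)$, pick $p_i \in V_i \cap S$ and $p_j \in V_j \cap S$, and join them by a path $\gamma \subset S$. The path traverses a finite sequence of triangles $V_{i_0}=V_i, V_{i_1}, \ldots, V_{i_n}=V_j$, and each transition point lies in $\overline{V_{i_m}} \cap \overline{V_{i_{m+1}}} \cap H^{-1}(I)$, so $A_{i_m i_{m+1}} = 1$. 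Maximality follows from Corollary \ref{distinguish}: if $V_k$ satisfies $A_{ik}=1$ for some $V_i \in \mathcal{R}_{min}(S)$, then a point $q \in \overline{V_i} \cap \overline{V_k} \cap H^{-1}(I)$ exists; openness of $I$ plus continuity of $H$ furnish an open neighborhood $U \ni q$ with $U \subset H^{-1}(I)$, and since $q$ lies on a shared edge or vertex, $U$ meets the interior $V_k^\circ$, placing such interior points into $H^{-1}(I)$ and hence into $S$ by the corollary.

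For the reverse inclusion, fix a connected component $\mathcal{R}_\nu$. For each $V_i \in \mathcal{R}_\nu$ pick $p_i \in V_i \cap H^{-1}(I)$, which is nonempty since $A_{ii}=1$ for every triangle participating in a component. I would show by induction on graph distance that all such $p_i$ lie in the same connected component of $H^{-1}(I)$, hence in a single bundle $S_\nu$. When $V_i, V_j$ are adjacent, pick $q \in \overline{V_i} \cap \overline{V_j} \cap H^{-1}(I)$ and a neighborhood $U \subset H^{-1}(I)$ of $q$; choose $p_i' \in U \cap V_i^\circ$ and $p_j' \in U \cap V_j^\circ$. Convexity of $V_i \cap H^{-1}(I)$ gives a straight-line path from $p_i$ to $p_i'$ inside $H^{-1}(I)$, and analogously for $V_j$, while $U$ connects $p_i'$ and $p_j'$. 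Concatenating yields a path from $p_i$ to $p_j$ in $H^{-1}(I)$. So $\mathcal{R}_\nu \subset \mathcal{R}_{min}(S_\nu)$, and the opposite inclusion follows from the first part together with the preceding proposition which forces distinct minimal covers to be disjoint; consequently $\mathcal{R}_\nu = \mathcal{R}_{min}(S_\nu)$.

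The main obstacle is the boundary-handling at shared edges and vertices. The core technical point that makes the whole argument work is that a boundary point $q$ with $H(q) \in I$ admits a genuine two-dimensional neighborhood in $H^{-1}(I)$ that protrudes into the interior of each adjacent triangle; this requires openness of $I$ and is precisely why the hypothesis that $H_a, H_b$ avoid node values is imposed, eliminating the saddle-point configurations where the preimage could pinch. Everything else reduces to elementary convex geometry and graph connectivity.
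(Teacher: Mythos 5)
Your proposal is correct and follows essentially the same strategy as the paper's proof: a double inclusion that translates back and forth between path-connectivity inside the open set $H^{-1}(I)$ (paths in a bundle $S$ induce walks in the connection graph; graph-adjacency plus a shared boundary point with $H$-value in $I$ induces a path in $H^{-1}(I)$). The only difference is that you make explicit two point-set details the paper leaves implicit — the convexity of $V_i\cap H^{-1}(I)$ coming from linearity of $H$ on each triangle, and the open neighborhood of a shared edge/vertex point that protrudes into both adjacent triangle interiors — which is a welcome tightening rather than a change of route.
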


\begin{proof}
    It takes two steps to prove the theorem.
    \begin{enumerate}
        \item Prove that $\{\mathcal{R}_{\nu}\} \subset \left\{ \mathcal{R}_{min}(S):S\in\mathcal{F}(I)\right\} $.

        By definition, any connected component $\mathcal{R}_{\nu}$ contains at least two elements, say $V_{m}$ and $V_{n}$. 
        
        Since $H^{-1}(I)$ is open,
        \begin{equation*}
            \overline{V_{m}}\cap\overline{V_{n}}\cap H^{-1}(I)\neq\emptyset\Rightarrow V_{m}\cap H^{-1}(I)\neq\emptyset.
        \end{equation*}

        Considering that
        \begin{equation*}
            \emptyset \neq V_{m}\cap H^{-1}(I)=V_{m}\cap\left(\cup_{\mathcal{F}}S\right)=\cup_{\mathcal{F}}\left(V_{m}\cap S\right),
        \end{equation*}
        
        there must exists a trajectory bundle $S\in \mathcal{F}$ such that
        \begin{equation*}
            V_{m}\cap S \neq \emptyset.
        \end{equation*}

        It remains to show that $\mathcal{R}_{\nu}=\mathcal{R}_{min}(S)$.
        \begin{itemize}
            \item Prove that $\mathcal{R}_{\nu} \subset \mathcal{R}_{min}(S)$.

            Recall that $V_{m}\cap S \neq \emptyset$, then for any $V_{l}\in \mathcal{R}_{\nu}$, there must be a continuous path inside $H^{-1}(I)$ from $(k_{1}, x_{1}) \in V_{l}\cap H^{-1}(I)$ to  $(k_{2}, x_{2}) \in V_{m}\cap H^{-1}(I)$. 
            Since $(k_{2}, x_{2}) \in S$ implies that $(k_{1}, x_{1}) \in S$, it follows that $V_{l}\cap S \neq \emptyset$. Therefore $\mathcal{R}_{\nu} \subset \mathcal{R}_{min}(S)$.
            
            \item Prove that $\mathcal{R}_{\nu} \supset \mathcal{R}_{min}(S)$.

            Consider a triangle $V_{l} \in \mathcal{R}_{min}(S)$, then by definition $V_{l} \cap S \neq \emptyset$. Suppose that $V_{l} \notin \mathcal{R}_{\nu}$, then $(k_{1}, x_{1})\in V_{l} \cap S$ is not connected to any point $(k_{2}, x_{2}) \in S\cap\cup_{i\in\mathcal{R}_{\nu}}\overline{V_{i}}$, hence $S$ is not a connected set, contradictory to its definition, therefore $V_{n} \in \mathcal{R}_{\nu}$.
        \end{itemize}
        
        \item Prove that $\{\mathcal{R}_{\nu}\} \supset \left\{ \mathcal{R}_{min}(S):S\in\mathcal{F}(I)\right\} $.

        By definition $\cup_{\nu} \mathcal{R}_{\nu} = \cup_{\mathcal{F}} \mathcal{R}_{min}(S)$, hence for any $S$ there exists an $\mathcal{R}_{\nu}$ such that $\mathcal{R}_{\nu} \cap \mathcal{R}_{min}(S) \neq \emptyset$. In the same way as above, it can be proved that $\mathcal{R}_{\nu}=\mathcal{R}_{min}(S)$.
    \end{enumerate}
\end{proof}

In principle, once the minimal triangle covers are constructed, any integral can be performed by quadrature. However, recall the semi-discrete weak form 
\begin{equation*}
\left(\partial_{t}N_{h},\eta_{h}\right)_{\Omega_{b}}=\left(\gamma N_{h},\eta_{h}\right)_{\Omega_{b}}.
\end{equation*}

The mass matrix $\mathcal{M}_{ij} = \left(\eta_{i}, \eta_{j}\right)_{\Omega_{b}}$ plays the key role here, and we expect to calculate the precise values. This is possible thanks to the fact that 
\begin{equation*}
    \left(\eta_{i},\eta_{j}\right)_{\Omega_{b}}=\delta_{ij}\left(\eta_{i},\eta_{i}\right)_{\Omega_{b}} = \delta_{ij}\left(\eta_{i},1\right)_{\Omega_{b}} = \delta_{ij} \cdot \mu \left(S_{i}\right),
\end{equation*}
where $S_{i} = \text{supp}\left(\eta_{i}\right)$ is a trajectory bundle and $\mu(S_{i})$ represents its measure.

\begin{theorem}
    For a triangle $V$ with vertices $(k_{i}, x_{i})$, $i=1,2,3$. The proportion $\frac{\mu(V\cap H^{-1}(I))}{\mu(V)}$ only depends on the interval $(H_{a}$, $H_{b})$ and the node values $H_{i}= H(k_{i}, x_{i})$, $i=1,2,3$.
\end{theorem}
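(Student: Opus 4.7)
The plan is to exploit the two structural facts we have for free: (i) by assumption, $H$ is affine on each triangle $V$, and (ii) ratios of planar Lebesgue measures are invariant under affine diffeomorphisms. Together these reduce the problem to a computation on a single reference simplex whose answer is manifestly a function of $(H_1,H_2,H_3,H_a,H_b)$ alone.

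Concretely, I would first parameterize $V$ by barycentric coordinates, writing any point as $\lambda_1(k_1,x_1)+\lambda_2(k_2,x_2)+\lambda_3(k_3,x_3)$ with $\lambda_i\ge 0$ and $\lambda_1+\lambda_2+\lambda_3=1$. Because $H$ is affine on $V$, it lifts to the simplex as $H=\lambda_1 H_1+\lambda_2 H_2+\lambda_3 H_3$. Using the chart $(\lambda_1,\lambda_2)\mapsto(1-\lambda_1-\lambda_2)(k_1,x_1)+\lambda_1(k_2,x_2)+\lambda_2(k_3,x_3)$ from the standard simplex $\Sigma=\{\lambda_1,\lambda_2\ge 0,\;\lambda_1+\lambda_2\le 1\}\subset\mathbb{R}^2$, the Jacobian determinant is the constant $J=\det\bigl((k_2,x_2)-(k_1,x_1),\,(k_3,x_3)-(k_1,x_1)\bigr)$, so $\mu(V)=|J|/2$. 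Applying the change of variables to the numerator as well gives
\begin{equation*}
\frac{\mu(V\cap H^{-1}(I))}{\mu(V)}
=2\,\mu\!\Bigl(\bigl\{(\lambda_1,\lambda_2)\in\Sigma:\ H_a<H_1+\lambda_1(H_2-H_1)+\lambda_2(H_3-H_1)<H_b\bigr\}\Bigr),
\end{equation*}
and the right side is an explicit function of $(H_1,H_2,H_3,H_a,H_b)$ only, since the factor $|J|$ has cancelled.

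No step looks like a genuine obstacle; the argument is essentially affine invariance plus the barycentric formula for a linear function. The mildest technical point is simply confirming that the set appearing in the numerator is measurable (it is the intersection of $\Sigma$ with an open strip between two parallel lines, hence an open polygon) and checking that the Jacobian computation is correct. Degenerate configurations, such as a level set of $H$ passing through a vertex, would cause notational headaches but are excluded by the standing assumption that $H_a$ and $H_b$ avoid node values of the piecewise linear Hamiltonian. As a side benefit, the resulting formula is a practical recipe: one can evaluate the ratio in closed form by case analysis on the ordering of $H_1,H_2,H_3$ relative to $H_a,H_b$, which is exactly what one needs for the mass-matrix entries $\mu(S_i)$ described just before the theorem.
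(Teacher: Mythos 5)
Your argument is correct, but it takes a genuinely different route from the paper's. You establish the invariance abstractly: pull $V$ back to the reference simplex $\Sigma$ by the affine barycentric chart, note that the constant Jacobian $|J|$ cancels in the ratio of measures, and observe that the pulled-back constraint $H_a<H_1+\lambda_1(H_2-H_1)+\lambda_2(H_3-H_1)<H_b$ involves only the node values and the interval. (The small index slip --- $\lambda_1$ multiplies $(k_1,x_1)$ in your first display but $(k_2,x_2)$ in the chart --- is a harmless relabeling; your final formula is internally consistent.) The paper instead computes the proportion explicitly: assuming without loss of generality $H_1=H_2<H_a<H_3$, the set $V\cap H^{-1}(H_a,+\infty)$ is a triangle similar to $V$ with similarity ratio $(H_3-H_a)/(H_3-H_1)$, so the proportion is $\left(\frac{H_3-H_a}{H_3-H_1}\right)^2$, and the general ordering $H_1<H_2<H_3$ is reduced to this case by first splitting $V$ along the level set through the middle vertex. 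Your approach is cleaner for proving the stated dependence claim and generalizes verbatim to simplices in any dimension, which matters for the paper's remark that the connection-proportion algorithm extends to higher dimensions via minimal simplex covers; the paper's similar-triangle computation, on the other hand, directly produces the closed-form proportions that the implementation actually stores in the proportion vector $r_m$. As you acknowledge, extracting those formulas from your reduction still requires the same case analysis on the ordering of $H_1,H_2,H_3$ relative to $H_a,H_b$, so the two proofs converge on the same computation; yours simply separates the invariance statement from it.
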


\begin{proof}
    Without loss of generality, we assume that $H_{1} = H_{2} < H_{a} < H_{3}$. Since $H$ is piecewise linear on the mesh, the set $V\cap H^{-1}(H_{a}, +\infty)$ is a similar triangle of $V$ with vertices 
    \begin{equation*}
        \begin{split}
            \left(\tilde{k_{1}},\tilde{x_{1}}\right)
            &=\frac{H_{3}-H_{a}}{H_{3}-H_{1}}\left(k_{1},x_{1}\right)+\frac{H_{a}-H_{1}}{H_{3}-H_{1}}\left(k_{3},x_{3}\right),\\
            \left(\tilde{k_{2}},\tilde{x_{2}}\right)
            &=\frac{H_{3}-H_{a}}{H_{3}-H_{1}}\left(k_{2},x_{2}\right)+\frac{H_{a}-H_{1}}{H_{3}-H_{1}}\left(k_{3},x_{3}\right),\\
            \left(\tilde{k_{3}},\tilde{x_{3}}\right)
            &=\left(k_{3},x_{3}\right).
        \end{split}
    \end{equation*}

    Therefore the following proportion is only dependent on $H_{1}$, $H_{3}$ and $H_{a}$.
    \begin{equation*}
        \frac{\mu(V\cap H^{-1}(H_{a},+\infty))}{\mu(V)}=\left(\frac{H_{3}-H_{a}}{H_{3}-H_{1}}\right)^{2}.
    \end{equation*}

    For the case where $H_{1} < H_{2} < H_{3}$, it is always possible to split the triangle into two by the level set w.r.t. $H_{2}$ and repeat the procedure above.
\end{proof}

Define the proportion vector as
\begin{equation*}
    r_{m}\coloneqq\frac{\mu(V_{m}\cap H^{-1}(I))}{\mu(V_{m})}.
\end{equation*}

Then for any given trajectory bundle $S$,  
\begin{equation*}
    \mu(S)=\sum_{V_{m}\in\mathcal{R}_{min}(S)}r_{m}\cdot\mu(V_{m}).
\end{equation*}

\begin{remark}
    The connection-proportion algorithm can be generalized to higher dimension straightforwardly, where one need to construct the minimal simplex cover instead.
\end{remark}

\section{The conservative scheme based on LDG method} \label{sec:ldg}
As have been proved in Theorem \ref{conserv}, the averaged system admits three conservation laws. In this section, we seek for a discretization that preserves all of them rigorously. The idea here is analogous to the one introduced in our previous work \cite{huang2023conservative}: to replace some quantities with their projection in the test spaces. In what follows, we generalize the idea from continuous Galerkin method to local discontinuous Galerkin method.

\subsection{The bilinear form of local DG method}

Consider the following diffusion equation in bounded domain $\Omega$,
\begin{equation}
    \partial_{t}f=\nabla\cdot\left(D\cdot\nabla f\right),
\end{equation}
with Neumann's boundary condition on $\partial \Omega$. 
\begin{equation*}
    D\cdot\nabla f = \mathbf{0}.
\end{equation*}

Suppose the domain $\Omega$ is partitioned into elements $\{R\}$.  The standard local discontinuous Galerkin method uses the following weak form,
\begin{equation} \label{weakform-ldg}
    \begin{split}
        \sum_{R}\left(\partial_{t}f,\varphi\right)_{R}
        &=-\sum_{R}\left(Z,\nabla\varphi\right)_{R}+\sum_{R}\langle\widehat{Z},\varphi^{-}\mathbf{n}^{-}\rangle_{\partial R/\Gamma}\\
        \sum_{R}\left(Z,\widetilde{V}\right)_{R}
        &=\sum_{R}\left(D\cdot\widetilde{Z},\widetilde{V}\right)_{R}\\
        \sum_{R}\left(\widetilde{Z},V\right)_{R}
        &=-\sum_{R}\left(f,\nabla\cdot V\right)_{R}+\sum_{R}\langle\widehat{f},V^{-}\cdot\mathbf{n}^{-}\rangle_{\partial R}
    \end{split}
\end{equation}

Choosing an appropriate reference vector $\mathbf{u}$, we use alternating flux as follows:
\begin{equation}\label{Zflux}
    \widehat{Z}=\begin{cases}
Z^{-}, & \mathbf{n}^{-}\cdot\mathbf{u}>0\\
Z^{+}, & \text{otherwise}
\end{cases}
\end{equation}
and
\begin{equation}\label{fflux}
    \widehat{f}=\begin{cases}
f^{-}, & \mathbf{n}^{-}\cdot\mathbf{u}<0\ \text{or\ \ensuremath{\partial R}\ensuremath{\subset\partial\Omega}}\\
f^{+}, & \text{otherwise}
\end{cases}
\end{equation}

\begin{defi}[discrete gradient operator]\label{discretegrad}

    We define $\left(\nabla\varphi\right)_{u}$ as the function in $\mathcal{V}$ such that,
    \begin{equation}\label{ugrad}
        -\sum_{R}\left(\left(\nabla\varphi\right)_{u},V\right)_{R}=-\sum_{R}\left(\nabla\varphi,V\right)_{R}+\sum_{R}\langle\varphi^{-}\mathbf{n}^{-},\widehat{V}\rangle_{\partial R/\Gamma},\forall V\in\mathcal{V}
    \end{equation}

    Analogously, define $\left(\nabla\varphi\right)_{d}$ as the function
    in $\mathcal{V}$ such that
    \begin{equation}\label{dgrad}
        \sum_{R}\left(\left(\nabla\varphi\right)_{d},V\right)_{R}=-\sum_{R}\left(\varphi,\nabla\cdot V\right)_{R}+\sum_{R}\langle\widehat{\varphi},V^{-}\cdot\mathbf{n}^{-}\rangle_{\partial R},\forall V\in\mathcal{V}
    \end{equation}
\end{defi}

\begin{prop}\label{u_equiv_d}
    If the flux terms are as defined in Equation(\ref{Zflux}) and Equation(\ref{fflux}), then
    \begin{equation}
        \left(\nabla\varphi\right)_{u}=\left(\nabla\varphi\right)_{d}
    \end{equation}
\end{prop}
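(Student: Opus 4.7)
The plan is to test both $(\nabla\varphi)_{u}$ and $(\nabla\varphi)_{d}$ against an arbitrary $V\in\mathcal{V}$ and show that the resulting linear functionals of $V$ agree. The two definitions differ only in their treatment of boundary traces: Equation(\ref{ugrad}) keeps the volume integral of $\nabla\varphi$ and uses an alternating flux $\widehat{V}$ on the interior skeleton, while Equation(\ref{dgrad}) has already performed integration by parts and instead uses a flux $\widehat{\varphi}$ on every $\partial R$. Thus after one element-wise integration by parts, the proof should reduce to an algebraic identity purely on edge traces.

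Concretely, I would first apply the divergence theorem inside each element, $(\nabla\varphi, V)_{R} = -(\varphi, \nabla\cdot V)_{R} + \langle \varphi^{-}, V^{-}\cdot \mathbf{n}^{-}\rangle_{\partial R}$, to the volume term on the right-hand side of Equation(\ref{ugrad}). This replaces $(\nabla\varphi, V)_{R}$ by exactly the volume piece $-(\varphi,\nabla\cdot V)_{R}$ that appears in Equation(\ref{dgrad}), plus a boundary remainder. Subtracting Equation(\ref{dgrad}) from the rewritten Equation(\ref{ugrad}) then reduces the proposition to the single skeleton identity
\begin{equation*}
    \sum_{R}\langle \varphi^{-} - \widehat{\varphi},\, V^{-}\cdot\mathbf{n}^{-}\rangle_{\partial R} \;=\; \sum_{R} \langle \varphi^{-}\mathbf{n}^{-},\, \widehat{V}\rangle_{\partial R/\Gamma},
\end{equation*}
which I would verify face by face.

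On an interior face $e$ shared by neighbours $R_{-}$ and $R_{+}$ with unit normal $\mathbf{n}_{-}$ pointing from $R_{-}$ to $R_{+}$, combining the two elements' contributions collapses the left-hand side to $\int_{e}[\varphi^{-}(V^{-}-\widehat{V}) - \varphi^{+}(V^{+}-\widehat{V})]\cdot\mathbf{n}_{-}$ and the right-hand side to $\int_{e}(\varphi^{-}-\varphi^{+})\widehat{V}\cdot\mathbf{n}_{-}$, after accounting for the opposite outward normals. The matching now depends crucially on the fact that the alternating rules Equation(\ref{Zflux}) and Equation(\ref{fflux}) point in opposite directions: when $\mathbf{n}_{-}\cdot\mathbf{u}>0$ one has $\widehat{V}=V^{-}$ paired with $\widehat{\varphi}=\varphi^{+}$, and both sides simplify to $\int_{e}\varphi^{+}(V^{-}-V^{+})\cdot\mathbf{n}_{-}$; the case $\mathbf{n}_{-}\cdot\mathbf{u}<0$ is symmetric. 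On a boundary face with $\partial R\subset\Gamma=\partial\Omega$, the right-hand sum omits the face by construction, while Equation(\ref{fflux}) forces $\widehat{\varphi}=\varphi^{-}$, making the left-hand integrand vanish, so the identity holds trivially on $\Gamma$.

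The only real obstacle is sign and convention bookkeeping: one must carefully keep track of outward normals on shared faces and verify that the complementary upwind/downwind rule is applied to the correct variable inside each of the two definitions. Once that bookkeeping is clean, the face-wise verification is a two-line algebra in each of the two flux regimes, and no functional-analytic input beyond the element-wise divergence theorem is needed.
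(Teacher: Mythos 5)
Your overall strategy is exactly the paper's: integrate by parts element-wise so that both definitions share the same volume term, reduce the proposition to a skeleton identity among the trace terms, and close that identity using the complementarity of the two alternating flux rules. The paper states the key cancellation globally, as
\begin{equation*}
\sum_{R}\langle\widehat{\varphi},V^{-}\cdot\mathbf{n}^{-}\rangle_{\partial R/\Gamma}+\sum_{R}\langle\varphi^{-}\mathbf{n}^{-},\widehat{V}\rangle_{\partial R/\Gamma}=\sum_{R}\langle\varphi^{-},V^{-}\cdot\mathbf{n}^{-}\rangle_{\partial R/\Gamma},
\end{equation*}
whereas you verify the equivalent statement face by face; this is the same argument at finer granularity, and your handling of boundary faces (the $\partial R\subset\Gamma$ case forcing $\widehat{\varphi}=\varphi^{-}$) is correct.

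However, the face-wise bookkeeping --- which you yourself flag as the only delicate point --- is wrong as written. The left-hand side of your skeleton identity carries the hat on $\varphi$, so on an interior face it collapses to $\left[(\varphi^{-}-\widehat{\varphi})V^{-}-(\varphi^{+}-\widehat{\varphi})V^{+}\right]\cdot\mathbf{n}_{-}$, not to $\left[\varphi^{-}(V^{-}-\widehat{V})-\varphi^{+}(V^{+}-\widehat{V})\right]\cdot\mathbf{n}_{-}$: you have transferred the hat from $\varphi$ to $V$. Consequently the common value you report, $\varphi^{+}(V^{-}-V^{+})\cdot\mathbf{n}_{-}$, does not equal your own right-hand side $(\varphi^{-}-\varphi^{+})\widehat{V}\cdot\mathbf{n}_{-}$ for general traces, so the two expressions you claim to match do not. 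With the correct grouping the verification does go through: when $\mathbf{n}_{-}\cdot\mathbf{u}>0$ one has $\widehat{\varphi}=\varphi^{+}$ and $\widehat{V}=V^{-}$, and both sides equal $(\varphi^{-}-\varphi^{+})V^{-}\cdot\mathbf{n}_{-}$; in the opposite case both equal $(\varphi^{-}-\varphi^{+})V^{+}\cdot\mathbf{n}_{-}$. So the route is the right one and does prove the proposition, but the algebra in the central step must be redone before the argument is sound.
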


\begin{proof}
    Integrate by parts on the right hand side of Equation(\ref{dgrad}),  we have
    \begin{equation*}
        -\sum_{R}\left(\varphi,\nabla\cdot V\right)_{R}+\sum_{R}\langle\widehat{\varphi},V^{-}\cdot\mathbf{n}^{-}\rangle_{\partial R}=\sum_{R}\left(\nabla\varphi,V\right)_{R}+\sum_{R}\langle\widehat{\varphi}-\varphi^{-},V^{-}\cdot\mathbf{n}^{-}\rangle_{\partial R/\Gamma}.
    \end{equation*}

    Summing Equation(\ref{ugrad}) and Equation(\ref{dgrad}) to obtain, 
    \begin{equation}\label{d-u}
        \sum_{R}\left(\left(\nabla\varphi\right)_{d}-\left(\nabla\varphi\right)_{u},V\right)_{R}=\sum_{R}\langle\widehat{\varphi}-\varphi^{-},V^{-}\cdot\mathbf{n}^{-}\rangle_{\partial R/\Gamma}+\sum_{R}\langle\varphi^{-}\mathbf{n}^{-},\widehat{V}\rangle_{\partial R/\Gamma}.
    \end{equation}

    Since
    \begin{equation*}
        \sum_{R}\langle\widehat{\varphi},V^{-}\cdot\mathbf{n}^{-}\rangle_{\partial R/\Gamma}+\sum_{R}\langle\varphi^{-}\mathbf{n}^{-},\widehat{V}\rangle_{\partial R/\Gamma}=\sum_{R}\langle\varphi^{-},V^{-}\cdot\mathbf{n}^{-}\rangle_{\partial R/\Gamma},
    \end{equation*}
    the right hand side of  Equation(\ref{d-u}) must be zero.
    
\end{proof}

As have been discussed in \cite{arnold2002unified}, the LDG weak form can also be written in bilinear form.

\begin{theorem}
    The weak form (\ref{weakform-ldg}) is equivalent to
    \begin{equation*}
        \left(\partial_{t}f,\varphi\right)_{\Omega}+\left(\left(\nabla f\right)_{d},D\cdot\left(\nabla\varphi\right)_{d}\right)_{\Omega}=0
    \end{equation*}
\end{theorem}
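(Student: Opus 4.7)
The plan is to eliminate the auxiliary variables $Z$ and $\widetilde Z$ from the three-equation LDG system (\ref{weakform-ldg}) by identifying each of them, via Definition \ref{discretegrad}, with a discrete gradient of $f$. The content of the theorem is then essentially algebraic: the first and third equations of (\ref{weakform-ldg}) are term-for-term instances of the defining identities (\ref{ugrad}) and (\ref{dgrad}), while the second equation fixes the constitutive relation $Z = D\cdot\widetilde Z$.

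First I would read off from the third equation of (\ref{weakform-ldg}), which matches (\ref{dgrad}) with $\varphi = f$ and the alternating flux $\widehat f$ from (\ref{fflux}), that $\widetilde Z = (\nabla f)_d$ in $\mathcal V$. The second equation, holding for every test vector $\widetilde V$, then forces $Z = D\cdot (\nabla f)_d$. Next I would turn to the first equation: its right-hand side has precisely the shape of the identity (\ref{ugrad}) applied to the test scalar $\varphi$ against the vector $V = Z$, because the flux $\widehat Z$ from (\ref{Zflux}) is exactly the one that appears inside $(\nabla\varphi)_u$. Using (\ref{ugrad}), the first equation collapses to
\begin{equation*}
(\partial_t f, \varphi)_\Omega \;=\; -\bigl((\nabla\varphi)_u,\; D\cdot(\nabla f)_d\bigr)_\Omega .
\end{equation*}
Invoking Proposition \ref{u_equiv_d} to swap $(\nabla\varphi)_u$ for $(\nabla\varphi)_d$, and using symmetry of the diffusion tensor $D$, rearranges this into the bilinear form stated in the theorem.

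For the reverse implication I would run the same chain backwards: given $f$ satisfying the bilinear identity, define $\widetilde Z := (\nabla f)_d$ and $Z := D\cdot \widetilde Z$ in $\mathcal V$; then equations two and three of (\ref{weakform-ldg}) hold by construction and by (\ref{dgrad}), while equation one follows from (\ref{ugrad}) combined once more with Proposition \ref{u_equiv_d}. The only mild subtlety is the mismatch between the boundary subsets $\partial R/\Gamma$ in equation one and $\partial R$ in equation three, together with the asymmetric roles of $\widehat Z$ and $\widehat f$; however, this mismatch is exactly what Proposition \ref{u_equiv_d} was established to absorb, so once that proposition is in hand no further boundary bookkeeping is required. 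I do not anticipate any serious obstacle beyond carefully tracking signs and the ordering of arguments in the $L^2$ pairings under $D$.
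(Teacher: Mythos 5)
Your proposal is correct and follows essentially the same route as the paper's proof: eliminate $\widetilde Z$ and $Z$ via Definition \ref{discretegrad} and the constitutive relation, then convert $(\nabla\varphi)_u$ to $(\nabla\varphi)_d$ using Proposition \ref{u_equiv_d}. Your explicit remarks on the symmetry of $D$ (which the paper uses silently when moving $D$ across the pairing) and on the reverse implication are harmless elaborations rather than a different argument.
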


\begin{proof}
    By Definition \ref{discretegrad}, the weak form (\ref{weakform-ldg}) is equivalent to
\begin{equation*}
    \begin{split}
        \sum_{R}\left(\partial_{t}f,\varphi\right)_{R}&=-\sum_{R}\left(\left(\nabla\varphi\right)_{u},Z\right)_{R}\\
        \sum_{R}\left(Z,\widetilde{V}\right)_{R}&=\sum_{R}\left(D\cdot\widetilde{Z},\widetilde{V}\right)_{R}\\
        \widetilde{Z}&=\left(\nabla f\right)_{d}
    \end{split}
\end{equation*}

Therefore, the corresponding bilinear form can be derived as follows,
\begin{equation*}
    \begin{split}
        \sum_{R}\left(\partial_{t}f,\varphi\right)_{R}&=-\sum_{R}\left(\left(\nabla\varphi\right)_{u},Z\right)_{R}\\
        &=-\sum_{R}\left(D\cdot\widetilde{Z},\left(\nabla\varphi\right)_{u}\right)_{R}\\
        &=-\sum_{R}\left(\left(\nabla f\right)_{d},D\cdot\left(\nabla\varphi\right)_{u}\right)_{R}\\
        &=-\sum_{R}\left(\left(\nabla f\right)_{d},D\cdot\left(\nabla\varphi\right)_{d}\right)_{R},
    \end{split}
\end{equation*}
where Proposition \ref{u_equiv_d} is used in the last row.
\end{proof}

\subsection{Space discretization}
\subsubsection*{Cut-off domains and boundary conditions}
As have been discussed in \cite{huang2023conservative}, it is assumed that given any $0<\epsilon_p \ll 1$ and $0 < \epsilon_k \ll 1$, there exists bounded domains $\Omega_{px}^{L}=\Omega_{p}^{L}\times\Omega_{x}\subsetneq\mathbb{R}_{p}^{3}\times\Omega_{x}$
and $\Omega_{kx}^{L}\subsetneq\mathbb{R}_{k}^{3}\times\Omega_{x}$ such that  for any $t\geq0$,
\begin{equation*}
    \left|1-\frac{\int_{\Omega_{px}^{L}}f(\mathbf{p},\mathbf{x},t)}{\int_{\mathbb{R}_{p}^{3}\times\Omega_{x}}f(\mathbf{p},\mathbf{x},t)}\right|\leq\epsilon_{p},
\end{equation*}
and
\begin{equation*}
    \left|1-\frac{\int_{\Omega_{kx}^{L}}N(\mathbf{k},\mathbf{x},t)}{\int_{\mathbb{R}_{k}^{3}\times\Omega_{x}}N(\mathbf{k},\mathbf{x},t)}\right|\leq\epsilon_{k}.
\end{equation*}

The cut-off domain $\Omega_{px}^{L}$ for particles is supposed to be adaptive to ensure that $|f|$ and $|\nabla_{\mathbf{p}}f|$ are nearly zero on the boundary, while in our numerical experiments it turns out that, as a result of anisotropic diffusion, there is no need to extend it.

The cut-off domain $\Omega_{kx}^{L}$ for plasmons is supposed to consist of trajectory bundles. We will elaborate on some technical details in Section \ref{sec:implement}.

Under the above assumptions, it is reasonable to solve the equations in cut-off domains. For plasmon \textit{pdf} $N$, there is no need for boundary conditions. For particle \textit{pdf} $f$, we use zero-flux boundary condition in momentum space,
\begin{equation*}
    \left(D[N] \cdot \nabla_{\mathbf{p}}f\right)\cdot\mathbf{n}_{p}=0,\ \forall\mathbf{p}\in\partial\Omega_{p}^{L}.
\end{equation*}

In fact we can also use zero-value boundary condition, the results would not differ much, since the boundary flux terms are below machine epsilon anyway if the cut-off domains are large enough. 

\subsubsection*{Discrete test spaces}
Note that the particle \textit{pdf} is symmetric, i.e. $f(\mathbf{p}, \mathbf{x}, t) = f(p_{\parallel}, p_{\perp}, r, t)$, we have
\begin{equation*}
    \mathbf{p}=(p_{1},p_{2},p_{3})\in\Omega_{p}^{L}\Leftrightarrow(p_{\parallel},p_{\perp})\in\tilde{\Omega}_{p}^{L}\subset\mathbb{R}\times\mathbb{R}^{+}.
\end{equation*}

Let $\{R_{p}\}$ be the rectangular partition of $\tilde{\Omega}^{L}_{p}$, let $\{R_{x}\}$ be the partition of interval $(0,R_{max})$, and let  $\{S\}$ be the coarse-trajectorial partition of $\Omega_{kx}^{L}$.

The test space for particle \textit{pdf} consists of discontinuous piecewise polynomials,
\begin{equation}\label{testspace_f}
    \mathcal{G}_{h}\coloneqq\{f(p_{\parallel},p_{\perp},r):f|_{R_{p}\times R_{x}}\in Q^{\alpha_{1}}(R_{p})\times Q^{\alpha_{2}}(R_{x}),\forall~R_{p},R_{x}\}.
\end{equation}

The test space for plasmon \textit{pdf} consists of indicator functions of trajectory bundles,
\begin{equation}\label{testspace_N}
    \mathcal{N}_{h}\coloneqq\left\{ N(\mathbf{k},\mathbf{x}):N=\sum_{S}n_{S}\mathbf{1}_{S}\right\}.
\end{equation}

\subsection{The conservative scheme}
Before proposing the unconditionally conservative scheme, we introduce two projection operators onto the test spaces defined above. They can be arbitrarily chosen as long as the following conditions are satisfied:

\begin{enumerate}
    \item The projection $\Pi_{px,h}$ onto test space $\mathcal{G}_{h}$ must satisfy that 
    \begin{equation*}
        \ensuremath{\lim_{h\rightarrow0}\Vert\Pi_{px,h}g(\mathbf{p},\mathbf{x})-g(\mathbf{p},\mathbf{x})\Vert_{L^{2}(\Omega_{px}^{L})}=0},\ \forall g\in L^{2}(\Omega_{px}^{L}),
    \end{equation*}
    and
    \begin{equation*}
        \ensuremath{\lim_{h\rightarrow0}\Vert\nabla_{p,h}\left(\Pi_{px,h}E(\mathbf{p})\right)-\nabla_{p}E(\mathbf{p})\Vert_{L^{2}(\Omega_{px}^{L})}=0},
    \end{equation*}
    where the discrete gradient operator $\nabla_{p,h}$ is defined in Definition \ref{discretegrad}.

    \item The projection $\Pi_{kx,h}$ onto test space $\mathcal{N}_{h}$ must satisfy that 
    \begin{equation*}
        \ensuremath{\lim_{h\rightarrow0}\Vert\Pi_{kx,h}\xi(\mathbf{k}, \mathbf{x})-\xi(\mathbf{k}, \mathbf{x})\Vert_{L^{2}(\Omega_{kx}^{L})}=0},\ \forall\xi\in\text{ker}\mathcal{T}\cap L^{2}(\Omega_{kx}^{L}).
    \end{equation*}

\end{enumerate}

There is no need to specify particular projections until we implement them in the numerical examples, our method works with any of them.

Recall the definition of directional differential operator
\begin{equation*}
    \mathcal{L}g\coloneqq k_{\parallel}\frac{\partial E}{\partial p_{\perp}}\frac{p}{p_{\perp}}\frac{\partial g}{\partial p_{\parallel}}+\left(\omega-k_{\parallel}\frac{\partial E}{\partial p_{\parallel}}\right)\frac{p}{p_{\perp}}\frac{\partial g}{\partial p_{\perp}}.
\end{equation*}

We propose a discretized operator as follows,
\begin{equation}\label{L_h}
   \mathcal{L}_{h}g_{h}\coloneqq k_{\parallel,h}\left(\partial_{\perp,h}E_{h}\right)\cdot\frac{p}{p_{\perp}}\left(\partial_{\parallel,h}g_{h}\right)+\left(\omega_{h}-k_{\parallel,h}\left(\partial_{\parallel,h}E_{h}\right)\right)\frac{p}{p_{\perp}}\left(\partial_{\perp,h}g_{h}\right),
\end{equation}
where 
\begin{equation*}
    \begin{split}
        k_{\parallel,h}&\coloneqq\Pi_{kx,h}k_{\parallel},\\
        \omega_{h}&\coloneqq\Pi_{kx,h}\omega,\\
        E_{h}&\coloneqq\Pi_{px,h}E,\\
        \partial_{\parallel,h}g_{h}&\coloneqq\frac{\mathbf{B}}{|\mathbf{B}|}\cdot\nabla_{p,h}g_{h},\\
        \partial_{\perp,h}g_{h}&\coloneqq\left\vert \left(I-\frac{\mathbf{B}}{|\mathbf{B}|}\otimes\frac{\mathbf{B}}{|\mathbf{B}|}\right)\cdot\nabla_{p,h}g_{h}\right\vert.
    \end{split}
\end{equation*}
and the discrete gradient operator $\nabla_{p,h}$ is defined in Definition \ref{discretegrad}.

Based on the above discrete operator $\mathcal{L}_{h}$, we propose an unconditionally conservative semi-discrete weak form.

\begin{theorem}
    If $f_{h}(t)\in\mathcal{G}_{h}$ and $N_{h}(t)\in\mathcal{N}_{h}$ satisfies that
    \begin{equation}\label{semi-discrete}
        \iint_{px}\varphi_{h}\partial_{t}f_{h}+\iint_{kx}\eta_{h}\partial_{t}N_{h}=\iiint_{pkx}N_{h}\mathcal{B}_{\varepsilon}\left(\eta_{h}\mathcal{L}_{h}E_{h}-\omega_{h}\mathcal{L}_{h}\varphi_{h}\right)\mathcal{L}_{h}f_{h},
    \end{equation}
    for any $\varphi_{h}(t)\in\mathcal{G}_{h}$ and $\eta_{h}(t)\in\mathcal{N}_{h}$, then

    \begin{itemize}
    \item Mass Conservation
    \begin{equation*}
        \partial_{t}\mathcal{M}^{h}_{tot}\coloneqq \partial_{t}\left((f_{h},1)_{px}+(N_{h},0)_{kx}\right)=0.
    \end{equation*}
    
    \item Momentum Conservation
    \begin{equation*}
        \partial_{t}\mathcal{P}^{h}_{z,tot} \coloneqq \partial_{t}\left((f_{h},\Pi_{px,h} p_{z})_{px}+(N_{h},\Pi_{kx,h} k_{z})_{kx}\right) = O(e^{-L^{2}}),
    \end{equation*}
    where $L$ is the radius of the cut-off $p$-domain $\tilde{\Omega}_{p}^{L}$.
    \item Energy Conservation
    \begin{equation*}
        \partial_{t}\mathcal{E}^{h}_{tot} \coloneqq \partial_{t}\left((f_{h},\Pi_{px,h}E)_{px}+(N_{h},\Pi_{kx,h}\omega)_{kx}\right)=0.
    \end{equation*}
    
\end{itemize}
\end{theorem}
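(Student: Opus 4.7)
The plan is to reproduce the continuous argument of Theorem~\ref{conserv} at the discrete level by testing the semi-discrete weak form (\ref{semi-discrete}) with carefully chosen pairs $(\varphi_h,\eta_h)$, and to verify that the algebraic cancellation on the right-hand side survives the replacement of every continuous quantity by its discrete counterpart. The two ingredients I will repeatedly invoke are (a) the identity $\mathcal{L}_h E_h = \omega_h \frac{p}{p_\perp}\,\partial_{\perp,h}E_h$, which follows from (\ref{L_h}) by the same bookkeeping used at the continuous level (the $k_{\parallel,h}\partial_{\parallel,h}E_h$ contributions cancel inside the bracket), and (b) $\nabla_{p,h}$ annihilates constants and, by the interior-face cancellation already exploited in Definition~\ref{discretegrad}, reproduces the exact gradient on any globally continuous function that lies in $\mathcal{G}_h$.

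For mass, set $\varphi_h\equiv 1\in\mathcal{G}_h$ and $\eta_h\equiv 0\in\mathcal{N}_h$. The LHS becomes $\partial_t\mathcal{M}^h_{tot}$, and the RHS carries the factor $\mathcal{L}_h(1)$, which vanishes by (b). For energy, set $\varphi_h=E_h$ and $\eta_h=\omega_h$; both live by construction in the respective discrete spaces, so the LHS equals $\partial_t\mathcal{E}^h_{tot}$ and the RHS integrand becomes $\omega_h\mathcal{L}_h E_h - \omega_h\mathcal{L}_h E_h \equiv 0$. Both conservation laws come out exact, with no reference to the cutoff size.

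The momentum law is the main obstacle and the only one that produces a residual. Set $\varphi_h=\Pi_{px,h}p_z$ and $\eta_h=\Pi_{kx,h}k_z$. Because $\mathbf{B}\parallel\mathbf{e}_z$ gives $k_z\equiv k_\parallel$ and $p_z\equiv p_\parallel$, linearity of the projection yields $\Pi_{kx,h}k_z=k_{\parallel,h}$; and for any projection $\Pi_{px,h}$ that preserves the linear function $p_\parallel$ (for instance a standard $L^2$ projection with polynomial degree $\alpha_1\ge 1$), property (b) gives $\partial_{\parallel,h}(\Pi_{px,h}p_z)=1$ and $\partial_{\perp,h}(\Pi_{px,h}p_z)=0$ on every interior element. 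Combining these with identity (a), the RHS integrand collapses to $k_{\parallel,h}\omega_h\frac{p}{p_\perp}\partial_{\perp,h}E_h - \omega_h k_{\parallel,h}\partial_{\perp,h}E_h\frac{p}{p_\perp} = 0$, exactly mirroring the cancellation $k_z\mathcal{L}E-\omega\mathcal{L}p_z=0$ from the continuous proof. The $O(e^{-L^2})$ defect arises solely near the outer boundary $\Gamma=\partial\tilde\Omega_p^L$, where the LDG flux convention in Definition~\ref{discretegrad} omits surface fluxes; the uncancelled surface contributions are paired with $N_h\mathcal{B}_\varepsilon\mathcal{L}_h f_h$, and since $f_h$ inherits Maxwellian-type decay at the edge of the cutoff (as discussed before Section~\ref{sec:ldg}), each such term is of size $e^{-L^2}$. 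The hard part is precisely this boundary accounting: one must isolate the surface residues produced by $\nabla_{p,h}\Pi_{px,h}p_z-\mathbf{e}_z$ on the outermost layer of elements and bound them against the exponentially small tail of $f_h$ at $|\mathbf{p}|=L$.
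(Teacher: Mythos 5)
Your proposal follows essentially the same route as the paper's proof: mass and energy conservation by direct substitution of $(1,0)$ and $(E_h,\omega_h)$ using the discrete identity $\mathcal{L}_hE_h=\omega_h\frac{p}{p_\perp}\partial_{\perp,h}E_h$, and momentum conservation by reducing the integrand to $\omega_h k_{\parallel,h}(\partial_{\perp,h}E_h)\frac{p}{p_\perp}\left[1-\partial_{\parallel,h}(\Pi_{px,h}p_z)\right]$, which vanishes except on boundary elements where it is controlled by the decay of $f_h$ at the edge of the cut-off domain. The only cosmetic difference is that you require $\alpha_1\ge 1$ for $\partial_{\parallel,h}(\Pi_{px,h}p_z)=1$ in the interior, whereas the paper's one-sided discrete gradient (Equation (\ref{p0grad})) achieves this even for piecewise constants, with the defect confined to the outermost layer by the flux convention.
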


\begin{proof}
    Mass conservation and energy conservation are trivial by substituting test functions in the weak form (\ref{semi-discrete}). 
    
    For momentum conservation, note that
    \begin{equation*}
        \begin{split}
            &\left(\Pi_{kx,h}k_{z}\right)\mathcal{L}_{h}E_{h}-\omega_{h}\mathcal{L}_{h}\left(\Pi_{px,h}p_{z}\right)\\
            =&\left(\Pi_{kx,h}k_{z}\right)\omega_{h}\frac{p}{p_{\perp}}\left(\partial_{\perp,h}E_{h}\right)-\omega_{h}k_{\parallel,h}\left(\partial_{\perp,h}E_{h}\right)\frac{p}{p_{\perp}}\left(\partial_{\parallel,h}p_{z,h}\right)\\
            =&\omega_{h}k_{\parallel,h}\left(\partial_{\perp,h}E_{h}\right)\frac{p}{p_{\perp}}\left[1-\left(\partial_{\parallel,h}p_{z,h}\right)\right].
        \end{split}
    \end{equation*}

    In general $\left(\frac{\partial p_{z}}{\partial p_{\parallel}}\right)_{h} = 1$, except on some elements near boundary $\partial \Omega^{L}_{p}$. For example, see Equation(\ref{p0grad}). 
    
    Since $\nabla_{p,h}f_{h}$ on the boundary can be arbitrarily small for large enough cut-off domain, we conclude that
    \begin{equation*}
        \iiint_{pkx}N_{h}\mathcal{B}_{\varepsilon}\omega_{h}k_{\parallel,h}\left(\partial_{\perp,h}E_{h}\right)\left[1-\left(\partial_{\parallel,h}p_{z,h}\right)\right]\mathcal{L}_{h}f_{h}=\hat{0}.
    \end{equation*}
\end{proof}

\begin{remark}
    Local discontinuous Galerkin scheme with piecewise constant basis is equivalent to finite difference scheme, and in this particular case, the technique presented here is analogous to the finite difference scheme for Fokker-Planck-Landau equation proposed by Shiroto and Sentoku \cite{shiroto2019structure}.  
\end{remark}

\section{Implementation and complexity} \label{sec:implement}

In this section we elaborate on some technical details during implementation. In addition, the complexity of the proposed algorithms will be discussed.

\subsection{Interpolated Hamiltonian}
Recall the formulation of Poisson bracket
\begin{equation*}
    \mathcal{T}N \coloneqq \left\{ \omega,N\right\} =\frac{\partial N}{\partial r}\frac{\partial\omega}{\partial k_{r}} - \frac{\partial\omega}{\partial r}\frac{\partial N}{\partial k_{r}}.
\end{equation*}

The operator $\mathcal{T}$ determines a flow on $\mathbb{R}^{2}$. However, the Hamiltonian $\omega$ for plasmons is a function of four variables, $\omega = \omega(k_{r}, q_{\phi}, k_{z}, r)$, which means we have to construct the corresponding trajectory bundles for infinitely many $(q_{\phi}, k_{z})$. To avoid that, for implementation, we propose the following interpolation of the dispersion relation.

To begin with, partition the space $\Omega_{a} = \mathbb{R}_{q_{\phi}} \times \mathbb{R}_{k_{z}}$ into rectangular meshes $\{ R \}$, and partition the domain $\Omega_{b} = \mathbb{R}_{k_{r}} \times (0,R_{max})$ into triangular meshes $\{ T \}$.

Next, define the following space of piecewise polynomials.
\begin{equation*}
    \mathcal{H}=\left\{ H(k_{r},r,q_{\phi},k_{z})\in C^{0}(\Omega_{b}):H|_{R\times T}\in Q^{0}(R)\times P^{1}(T),\forall~R \subset \Omega_{a},T \subset \Omega_{b}\right\} .
\end{equation*}

Project the function $\omega = \omega(k_{r}, q_{\phi}, k_{z}, r)$ onto $\mathcal{H}$ to obtain the interpolated Hamiltonian $\overline{\omega} = \overline{\omega}(k_{r}, q_{\phi}, k_{z}, r)$. Obviously, for any given $(q_{\phi}, k_{z})$, $\overline{\omega}$ is a continuous piecewise linear function on $\Omega_{b}$, while for any given $(k_{r}, r)$, $\overline{\omega}$ is a piecewise constant function on $\Omega_{a}$. 

The purpose of such interpolation is as follows.
\begin{theorem}\label{thm_interpolate_traj}
    If $\omega \in \mathcal{H}$, and $S\subset \Omega_{b}$ is a trajectory bundle for $\omega(\cdot, \cdot, q_{\phi}, k_{z})$ with $(q_{\phi}, k_{z}) \in R \subset \Omega_{a}$, then
    \begin{equation*}
        \left\{ \omega,\mathbf{1}_{S\times R}\right\} =0.
    \end{equation*}
    In other words, $S \times R$ is a trajectory bundle w.r.t. the operator $\mathcal{T}N \coloneqq \{ \omega, N\}$.
\end{theorem}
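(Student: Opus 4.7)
The plan is to exploit the product structure of $\mathbf{1}_{S\times R}$ together with the defining property of the finite element space $\mathcal{H}$, and then reduce to the two-dimensional fact noted earlier in the paper: $\{H,\mathbf{1}_S\}=0$ for any trajectory bundle $S$ of a Hamiltonian $H$ on $\Omega_b$.

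First I would write $\mathbf{1}_{S\times R}(k_r,r,q_\phi,k_z) = \mathbf{1}_S(k_r,r)\,\mathbf{1}_R(q_\phi,k_z)$, and observe that the Poisson bracket $\mathcal{T}(\cdot)=\{\omega,\cdot\}$ only differentiates in $(k_r,r)$, so the factor $\mathbf{1}_R(q_\phi,k_z)$ passes through:
$$
\{\omega,\mathbf{1}_{S\times R}\}(k_r,r,q_\phi,k_z) \;=\; \mathbf{1}_R(q_\phi,k_z)\,\bigl\{\omega(\cdot,\cdot,q_\phi,k_z),\,\mathbf{1}_S\bigr\}_{(k_r,r)}.
$$
For $(q_\phi,k_z)\notin R$ the right-hand side vanishes trivially, so it suffices to handle $(q_\phi,k_z)\in R$.

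For $(q_\phi,k_z)\in R$, the definition of $\mathcal{H}$ says that $\omega$ is \emph{piecewise constant} in $(q_\phi,k_z)$ on each rectangle of the $\Omega_a$-partition. Hence the slice $H(k_r,r) := \omega(k_r,r,q_\phi,k_z)$ is independent of the particular $(q_\phi,k_z)\in R$ and is exactly the continuous piecewise-linear function on $\Omega_b$ for which $S$ was assumed to be a trajectory bundle. The earlier observation immediately following the definition of $\mathbf{1}_S$ in Section \ref{sec:trajbundle} gives $\{H,\mathbf{1}_S\}_{(k_r,r)}=0$, which finishes the argument.

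The step I expect to require most care is the distributional meaning of $\{H,\mathbf{1}_S\}=0$, since $\partial_{k_r}\mathbf{1}_S$ and $\partial_r\mathbf{1}_S$ are measures supported on $\partial S$. The idea is that $\partial S\setminus \partial\Omega_b$ lies on the level curves $\{H=H_a\}\cup\{H=H_b\}$, so the (piecewise) gradient $\nabla_{(k_r,r)}H$ is parallel to the outward normal of $\partial S$ there, forcing the two-dimensional cross product $\partial_r H\,\partial_{k_r}\mathbf{1}_S-\partial_{k_r}H\,\partial_r\mathbf{1}_S$ to vanish as a distribution. To make this precise one pairs both sides against a smooth test function in $(k_r,r)$, integrates by parts on each triangle $T$ of $\Omega_b$, and checks that the edge contributions cancel because $H$ is continuous across interior edges and constant on $\partial S$; the jumps of $\nabla H$ across interior triangle edges are then harmless since they only contribute tangentially along those edges, which are not level curves of $H$ intersecting the interior of $S$ in a measure-theoretic sense.
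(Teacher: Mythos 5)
Your argument is correct; the paper in fact states Theorem \ref{thm_interpolate_traj} without any proof, and your reduction --- factoring $\mathbf{1}_{S\times R}=\mathbf{1}_{S}\,\mathbf{1}_{R}$, noting that the bracket only differentiates in $(k_{r},r)$, and using the $Q^{0}$ dependence of $\omega\in\mathcal{H}$ on $(q_{\phi},k_{z})$ to identify every slice over $R$ with the single piecewise-linear Hamiltonian for which $S$ is a bundle --- is precisely the mechanism the construction of $\mathcal{H}$ is designed to enable. Your additional care with the distributional meaning of $\{H,\mathbf{1}_{S}\}=0$ (the Hamiltonian vector field is tangent to $\partial S$, and its edge-normal component is continuous across interior triangle edges because only the normal component of $\nabla H$ jumps there) supplies a justification that the paper leaves at the level of ``obviously $\{\mathbf{1}_{S},H\}=0$.''
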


Theorem \ref{thm_interpolate_traj} guarantees that it is sufficient to construct only a finite number of trajectories, as long as there is a representative $(q_{\phi}, k_{z})$ sample for each rectangular mesh $R$.

\subsection{Cut-off domains}
Another concern is the unboundedness of domain $\Omega_{b} = \mathbb{R}_{k_{r}} \times (0,R_{max})$. As introduced in Section \ref{sec:trajbundle}, the connection-proportion algorithm is designed for bounded domains. 

To resolve that, we consider the following cut-off domain, $ \Omega^{L}_{b} = (-L, L) \times (0, R_{max})$, then the connection-proportion algorithm is feasible. Denote the trajectory bundles generated by the algorithm as $\{S^{L}\}$. 

If a trajectory bundle $S^{L}$ does not intersect with the $k_{r}$ boundary $ e^{L} = \{-L,L\} \times (0, R_{max})$, then it is also a trajectory bundle for the original domain $\Omega_{b}$. Putting together all such trajectory bundles,  we have successfully constructed the coarse-trajectorial partition $\{ S \}$ of the cut-off domain $\Omega^{L}_{kx}$ used in Equation(\ref{testspace_N}).

\subsection{Interaction Tensors}
Recall the semi-discrete weak form
\begin{equation*}
    \iint_{px}\varphi_{h}\partial_{t}f_{h}+\iint_{kx}\eta_{h}\partial_{t}N_{h}=\iiint_{pkx}\left(\eta_{h}\mathcal{L}_{h}E_{h}-\omega_{h}\mathcal{L}_{h}\varphi_{h}\right)\mathcal{L}_{h}f_{h}N_{h}\mathcal{B}_{\varepsilon},
\end{equation*}

Suppose that the test space $\mathcal{G}_{h}$ consists of basis functions $\{ \varphi_{i} \}$ and the test space $\mathcal{N}_{h}$ consists of basis functions $\{ \eta_{j} \}$, and let
\begin{equation*}
    \begin{split}
        f_{h}(\mathbf{p},\mathbf{x},t)&=\sum a_{i}(t)\varphi_{i}(\mathbf{p},\mathbf{x}),\\
        N_{h}(\mathbf{k},\mathbf{x},t)&=\sum N_{j}(t)\eta_{j}(\mathbf{k},\mathbf{x}).
    \end{split}
\end{equation*}

The weak form is then equivalent to the following system,
\begin{equation}\label{eq_ode_sys}
    \begin{split}
        \sum_{i}\partial_{t}a_{i}\iint_{px}\phi_{i}\phi_{m}&=-\sum_{n}\sum_{q}\sum_{s}a_{n}N_{q}\omega_{s}\iiint_{pkx}\eta_{q}\eta_{s}\left(\mathcal{L}_{h}\phi_{m}\right)\left(\mathcal{L}_{h}\phi_{n}\right)\mathcal{B}_{\varepsilon},\\
        \sum_{j}\partial_{t}N_{j}\iint_{kx}\eta_{j}\eta_{s}&= \sum_{n}\sum_{q}\sum_{m}a_{n}N_{q}E_{m}\iiint_{pkx}\eta_{q}\eta_{s}\left(\mathcal{L}_{h}\phi_{m}\right)\left(\mathcal{L}_{h}\phi_{n}\right)\mathcal{B}_{\varepsilon}.
    \end{split}
\end{equation}

Denote the mass matrix for particle \textit{pdf} as $M_{im}=\iint_{px}\phi_{i}\phi_{m}$, and denote the mass matrix for plasmon \textit{pdf} as $G_{js}=\iint_{kx}\eta_{j}\eta_{s}$.

Note that the basis functions $\eta_{j}$'s are indicator functions of trajectory bundles, thus the interaction tensor,
\begin{equation*}
    H_{qsmn}=\iiint_{pkx}\eta_{q}\eta_{s}\left(\mathcal{L}_{h}\phi_{m}\right)\left(\mathcal{L}_{h}\phi_{n}\right)\mathcal{B}_{\varepsilon},
\end{equation*}
can be simplified as follows, 
\begin{equation*}
    \begin{split}
        H_{qsmn}
        &=\iiint_{pkx}\eta_{q}\eta_{s}\left(\mathcal{L}_{h}\phi_{m}\right)\left(\mathcal{L}_{h}\phi_{n}\right)\mathcal{B}_{\varepsilon}\\
        &=\iiint_{pkx}\delta_{qs}\eta_{q}\left(\mathcal{L}_{h}\phi_{m}\right)\left(\mathcal{L}_{h}\phi_{n}\right)\mathcal{B}_{\varepsilon}\\
        &=\delta_{qs}\iiint_{pkx}\eta_{q}\left(\mathcal{L}_{h}\phi_{m}\right)\left(\mathcal{L}_{h}\phi_{n}\right)\mathcal{B}_{\varepsilon}\\
        &=\delta_{qs}B_{qmn}.
    \end{split}
\end{equation*}

It is sufficient to compute only the sparse tensor
\begin{equation*}
    B_{qmn}=\iiint_{pkx}\eta_{q}\left(\mathcal{L}_{h}\phi_{m}\right)\left(\mathcal{L}_{h}\phi_{n}\right)\mathcal{B}_{\varepsilon}.
\end{equation*}

And Equation(\ref{eq_ode_sys}) can be written as follows,
\begin{equation}\label{eq_odesys_tensor}
    \begin{split}
        \sum_{i}M_{im}\partial_{t}a_{i}&=-\sum_{n}\sum_{q}\sum_{s}a_{n}N_{q}\omega_{s}\delta_{qs}B_{qmn}=-\sum_{n}\sum_{q}a_{n}N_{q}\omega_{q}B_{qmn},\\
        \sum_{j}G_{js}\partial_{t}N_{j}&=\sum_{n}\sum_{q}\sum_{m}a_{n}N_{q}E_{m}\delta_{qs}B_{qmn}=N_{s}\sum_{n}\sum_{m}a_{n}E_{m}B_{smn}.
    \end{split}
\end{equation}

Recall the definition of test space for particle \textit{pdf},
\begin{equation*}
    \mathcal{G}_{h}\coloneqq\{f(p_{\parallel},p_{\perp},r):f|_{R_{p}\times R_{x}}\in Q^{\alpha_{1}}(R_{p})\times Q^{\alpha_{2}}(R_{x}),\forall~R_{p},R_{x}\}.
\end{equation*}

Here we present the explicit formulation of interaction tensors for the simplest case: when $\alpha_{1} = \alpha_{2} = 0$, i.e. the basis functions are piecewise constant functions on the rectangular meshes. 

It can be verified that the discrete gradient operators read,
\begin{equation}\label{p0grad}
\begin{split}
    \left(\partial_{\parallel,h}g\right)_{i,j}&=\frac{g_{i+1,j}-g_{i,j}}{\left(\Delta p_{\parallel}\right)^{i}},\\
    \left(\partial_{\perp,h}g\right)_{i,j}&=\frac{p_{\perp}^{i,j+1/2}}{p_{\perp}^{i+1/2,j}}\frac{g_{i,j+1}-g_{i,j}}{\left(\Delta p_{\perp}\right)^{j}}.
\end{split}    
\end{equation}

By $i,j$ we mean the $i$th mesh in $p_{\parallel}$ axis and the $j$th mesh in $p_{\perp}$ axis. And for elements on the upper-right boundary we define
\begin{equation*}
    \begin{split}
        g_{I+1,j}=g_{I,j},\\
        g_{i,J+1}=g_{i,J}.
    \end{split}
\end{equation*}

Suppose that
\begin{equation*}
    \begin{split}
        \phi_{m}(p_{\parallel},p_{\perp},r)&=\lambda_{\xi}(r)\tilde{\phi}_{\mu}(p_{\parallel},p_{\perp}),\\
        \phi_{n}(p_{\parallel},p_{\perp},r)&=\lambda_{\zeta}(r)\tilde{\phi}_{\nu}(p_{\parallel},p_{\perp}).
    \end{split}
\end{equation*}

Then by definition
\begin{equation*}
    \begin{split}
        B_{qmn}
        &=\iiint_{pkx}\eta_{q}\left(\mathcal{L}_{h}\phi_{m}\right)\left(\mathcal{L}_{h}\phi_{n}\right)\mathcal{B}_{\varepsilon}\\
        &=\sum_{R_{ij}}\int_{R_{ij}}\left(\iint_{kx}\eta_{q}\lambda_{\xi}\lambda_{\zeta}\left(\mathcal{L}_{h}\tilde{\phi}_{\mu}\right)\left(\mathcal{L}_{h}\tilde{\phi}_{\nu}\right)\mathcal{B}_{\varepsilon}\right)\\
        &\approx\sum_{R_{ij}}\left(\Delta h_{\parallel}\right)^{i}\left(\Delta h_{\perp}\right)^{j}2\pi p_{\perp}^{j}\left.\iint_{kx}\left[\eta_{q}\lambda_{\xi}\lambda_{\zeta}\left(\mathcal{L}_{h}\tilde{\phi}_{\mu}\right)\left(\mathcal{L}_{h}\tilde{\phi}_{\nu}\right)\mathcal{B}_{\varepsilon}\right]\right\vert_{p_{\parallel}^{i},p_{\perp}^{j}}\\
        &=\sum_{R_{ij}}\delta_{\xi\zeta}\left(\Delta h_{\parallel}\right)^{i}\left(\Delta h_{\perp}\right)^{j}2\pi p_{\perp}^{j}\left[\left(\nabla_{h}\tilde{\phi}_{\mu}\right)_{ij}\cdot D^{ij}[\eta_{q},\lambda_{\xi}]\cdot\left(\nabla_{h}\tilde{\phi}_{\nu}\right)_{ij}\right],
    \end{split}
\end{equation*}
where the piecewise constant diffusion coefficient is defined as follows
\begin{equation*}
    D^{ij}[\eta_{q},\lambda_{\xi}]=\left.\left(\iint_{kx}\eta_{q}\lambda_{\xi}\left(\beta_{h}^{ij}\otimes\beta_{h}^{ij}\right)\mathcal{B}_{\varepsilon}\right)\right|_{p_{\parallel}^{i},p_{\perp}^{j}},
\end{equation*}
with
\begin{equation*}
    \beta_{h}^{ij}=\left[\begin{array}{cc}
k_{\parallel,h}\left(\partial_{\perp,h}E_{h}\right)^{ij}\frac{p^{ij}}{p_{\perp}^{j}}, & \left(\omega_{h}-k_{\parallel,h}\left(\partial_{\parallel,h}E_{h}\right)^{ij}\right)\frac{p^{ij}}{p_{\perp}^{j}}\end{array}\right].
\end{equation*}

The advantage of storing $D^{ij}[\eta_{q},\lambda_{\xi}]$ instead of the tensor $B_{qmn}$ is that, in this way, the CFL condition can be explicitly calculated.

\subsection{Complexity analysis}
Suppose that the $r$-domain $(0,R_{max})$ is partitioned into $n_{r}\sim \mathcal{O}(n)$ grids, the cut-off $p$-domain $(P^{l}_{\parallel},P^{r}_{\parallel})\times(0,P^{m}_{\perp})$ is partitioned into $n_{p}\sim \mathcal{O}(n^{2})$ grids. 

In addition, suppose that $n_{z\phi}\sim \mathcal{O}(n^{2})$ is the number of grids in cut-off $(k_{z}, q_{\phi})$-domain $(K^{l}_{z}, K^{r}_{z})\times (0, Q^{m}_{\phi})$, and $n_{\Delta}\sim \mathcal{O}(n^{2})$ is the number of triangular meshes for interpolation of the Hamiltonian $H(r,k_{r})$. For each discrete Hamiltonian, we stratify it into $n_{s} \sim \mathcal{O}(n)$ layers, then $n_{b} \sim \mathcal{O}(n^{3})$ trajectory bundles are generated in total.

\subsubsection*{data}
The degree of freedom for discrete particle \textit{pdf} $f_{h}$ is $n_{r}\times n_{p} \sim \mathcal{O}(n^{3})$, and for discrete plasmon \textit{pdf} $N_{h}$ it is $n_{b} \sim \mathcal{O}(n^{3})$.

\subsubsection*{solving}
The linear map from plasmon \textit{pdf} to element-wise diffusion coefficient is a $\left[n_{r}\times n_{p}\right] \times \left[n_{b} \right]$ sparse matrix with approximately $n_{r}\times n_{p} \times n_{z\phi}\sim \mathcal{O}(n^{5})$ non-zero elements. 

In each time step, the most expensive part is to obtain element-wise diffusion coefficients by matrix-vector product, taking $\mathcal{O}(n^{5})$ flops. Note that since the diffusion process happens only in momentum space, the solving procedure is naturally parallelizable along $r$-axis.

\subsubsection*{preparing}

For each grid in $(k_{z}, q_{\phi})$-domain, we interpolate the Hamiltonian $H(r,k_{r};k^{c}_{z}, q^{c}_{\phi})$ with piecewise linear basis on triangular meshes, which requires evaluating $\omega$ on $n_{\Delta}$ nodes. For each interval $(H_{a}, H_{b})$, constructing the connection matrix as defined in (\ref{connectionmat}) takes $n_{\Delta}$ flops, distinguishing all its connected components takes $\mathcal{O}(n)$ flops. Based on the above analysis, the time complexity for constructing all the trajectory bundles is $\mathcal{O}(n_{z\phi} \times n_{s} \times n_{\Delta}) \sim \mathcal{O}(n^{5})$. However, since the procedure is independent along $k_{z}$-axis and $q_{\phi}$-axis, the time complexity can be reduced in practice through parallel computing. 

For each of these trajectory bundles we have to store its minimal triangle cover, thus the space complexity is $\mathcal{O}(n^{4})$.

Constructing the diffusion coefficients takes $\left[n_{r}\times n_{p}\right] \times \left[n_{z\phi} \times n_{\Delta} \right] \sim \mathcal{O}(n^{6})$ flops, the procedure is also naturally parallelizable since all the evaluations are independent.

\section{Stability and positivity} \label{sec:stability}

In this section, we investigate the stability of the fully discretized nonlinear system. As can be observed, the stability relies on the positivity of plasmon \textit{pdf}, therefore the positivity-preserving  condition will also be discussed.

\begin{lemma}
    Suppose $f_{h}(t)\in\mathcal{G}_{h}$ and $N_{h}(t)\in\mathcal{N}_{h}$ solves Equation(\ref{semi-discrete}) with the following initial conditions,
    \begin{equation*}
        \begin{split}
            f_{h}(\mathbf{p},\mathbf{x},0)&=f_{h}^{0}(\mathbf{p},\mathbf{x}),\\
            N_{h}(\mathbf{k},\mathbf{x},0)&=N_{h}^{0}(\mathbf{k},\mathbf{x}).
        \end{split}
    \end{equation*}

    If the plasmon \textit{pdf} $N_{h}$ is always positive, i.e.
    \begin{equation*}
        N_{h}(\mathbf{k},t)\geq0,~\forall\left(\mathbf{k}, \mathbf{x}, t\right)\in\Omega^{L}_{kx}\times \left[0,+\infty\right),
    \end{equation*}
    then $f_{h}$ has $L^{2}$-stability,
    \begin{equation*}
        \Vert f_{h}\Vert_{L^{2}(\Omega_{px}^{L})}\leq\Vert f_{h}^{0}\Vert_{L^{2}(\Omega_{px}^{L})},
    \end{equation*}
    and $N_{h}$ has bounded $L^{1}_{\omega}$-norm,
    \begin{equation*}
        \Vert N_{h}\Vert_{L_{\omega}^{1}(\Omega_{kx}^{L})}\coloneqq\iint_{kx}|N_{h}|\omega_{h}\leq\mathcal{E}_{tot,h}^{0}+\Vert f_{h}^{0}\Vert_{L^{2}(\Omega_{px}^{L})}\cdot\Vert E_{h}\Vert_{L^{2}(\Omega_{px}^{L})}.
    \end{equation*}
\end{lemma}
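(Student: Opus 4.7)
The plan is to read off both bounds from a single clever choice of test functions together with the energy conservation already established. For the $L^{2}$ stability of $f_{h}$, I would set $\varphi_{h}=f_{h}\in\mathcal{G}_{h}$ and $\eta_{h}=0\in\mathcal{N}_{h}$ in the semi-discrete weak form (\ref{semi-discrete}). The left-hand side then collapses to $\tfrac{1}{2}\tfrac{d}{dt}\Vert f_{h}\Vert^{2}_{L^{2}(\Omega_{px}^{L})}$, and the right-hand side reduces to
\begin{equation*}
    -\iiint_{pkx} N_{h}\,\omega_{h}\,(\mathcal{L}_{h}f_{h})^{2}\,\mathcal{B}_{\varepsilon}.
\end{equation*}
Under the standing positivity hypothesis on $N_{h}$, together with the physical positivity of $\omega_{h}=\Pi_{kx,h}\omega$ and of the regularized kernel $\mathcal{B}_{\varepsilon}$, the integrand is pointwise nonnegative, so this term is $\leq 0$. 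Integrating in time gives $\Vert f_{h}(t)\Vert_{L^{2}}\leq\Vert f_{h}^{0}\Vert_{L^{2}}$, which is the first claim.

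For the weighted $L^{1}$ bound on $N_{h}$, I would then leverage the exact energy conservation law proved in the preceding theorem, namely
\begin{equation*}
    (f_{h},E_{h})_{px}+(N_{h},\omega_{h})_{kx}=\mathcal{E}^{h,0}_{tot}.
\end{equation*}
Because both $N_{h}\geq 0$ and $\omega_{h}\geq 0$, the weighted norm reduces to $\Vert N_{h}\Vert_{L^{1}_{\omega}}=(N_{h},\omega_{h})_{kx}$, so rearranging and applying the Cauchy--Schwarz inequality yields
\begin{equation*}
    \Vert N_{h}\Vert_{L^{1}_{\omega}(\Omega_{kx}^{L})}=\mathcal{E}^{h,0}_{tot}-(f_{h},E_{h})_{px}\leq \mathcal{E}^{h,0}_{tot}+\Vert f_{h}\Vert_{L^{2}}\Vert E_{h}\Vert_{L^{2}}.
\end{equation*}
Substituting the bound from the first step to replace $\Vert f_{h}\Vert_{L^{2}}$ by $\Vert f_{h}^{0}\Vert_{L^{2}}$ completes the proof.

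The main obstacle, and the reason the lemma is conditional, is the sign of $N_{h}$ along the discrete evolution: the entire argument for the $L^{2}$ dissipation of $f_{h}$ collapses if $N_{h}$ becomes negative on a set where $(\mathcal{L}_{h}f_{h})^{2}\mathcal{B}_{\varepsilon}$ is large, and the weighted norm $(N_{h},\omega_{h})_{kx}$ ceases to represent the $L^{1}_{\omega}$-norm without positivity. Pointwise positivity of $\omega_{h}$ after projection onto $\mathcal{N}_{h}$ is automatic since the basis consists of indicator functions of trajectory bundles forming a partition of unity, each coefficient being a non-negative average of $\omega$; positivity of $\mathcal{B}_{\varepsilon}$ is inherent to the regularization. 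Hence the essential difficulty is pushed entirely into the separate positivity-preservation analysis that the section title promises, leaving the stability estimate itself as a short consequence of the weak form plus energy conservation.
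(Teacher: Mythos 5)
The paper itself gives no proof of this lemma (it is stated without a \verb|proof| environment and the reader is referred to the earlier work \cite{huang2023conservative} for details), so there is nothing to compare line by line; judged on its own, your argument is correct and is the natural one: testing (\ref{semi-discrete}) with $\varphi_{h}=f_{h}$, $\eta_{h}=0$ gives $\tfrac12\tfrac{d}{dt}\Vert f_{h}\Vert_{L^{2}}^{2}=-\iiint_{pkx}N_{h}\,\omega_{h}\,(\mathcal{L}_{h}f_{h})^{2}\mathcal{B}_{\varepsilon}\leq 0$, and the $L^{1}_{\omega}$ bound follows from the discrete energy conservation identity plus Cauchy--Schwarz exactly as you write. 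The only caveat worth recording is that the sign argument uses the pointwise nonnegativity of $\omega_{h}$ and of the regularized kernel $\mathcal{B}_{\varepsilon}$, which the paper never states explicitly as hypotheses (they hold for the physical dispersion relation, for the particular projections used in Section \ref{sec:cylinder_results}, and for any reasonable mollification of the Dirac delta in (\ref{kernel}), but an arbitrary $\mathcal{B}_{\varepsilon}$ or an arbitrary $L^{2}$-convergent projection $\Pi_{kx,h}$ need not preserve positivity); you correctly flag both points, so no genuine gap remains.
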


There are four types of first order conservative time discretizations,
\begin{equation*}
    \iint_{px}\varphi_{h}\frac{f_{h}^{s+1}-f_{h}^{s}}{\Delta t}+\iint_{kx}\eta_{h}\frac{N_{h}^{s+1}-N_{h}^{s}}{\Delta t}=\begin{cases}
\iiint_{pkx}N_{h}^{s}\mathcal{B}_{\varepsilon}\left(\eta_{h}\mathcal{L}_{h}E_{h}-\omega_{h}\mathcal{L}_{h}\varphi_{h}\right)\mathcal{L}_{h}f_{h}^{s}, & \text{fully-explicit}\\
\iiint_{pkx}N_{h}^{s}\mathcal{B}_{\varepsilon}\left(\eta_{h}\mathcal{L}_{h}E_{h}-\omega_{h}\mathcal{L}_{h}\varphi_{h}\right)\mathcal{L}_{h}f_{h}^{s+1}, & \text{semi-implicit}\\
\iiint_{pkx}N_{h}^{s+1}\mathcal{B}_{\varepsilon}\left(\eta_{h}\mathcal{L}_{h}E_{h}-\omega_{h}\mathcal{L}_{h}\varphi_{h}\right)\mathcal{L}_{h}f_{h}^{s}, & \text{semi-implicit}\\
\iiint_{pkx}N_{h}^{s+1}\mathcal{B}_{\varepsilon}\left(\eta_{h}\mathcal{L}_{h}E_{h}-\omega_{h}\mathcal{L}_{h}\varphi_{h}\right)\mathcal{L}_{h}f_{h}^{s+1}, & \text{fully-implicit}
\end{cases}
\end{equation*}

The first and third row are explicit for particle \textit{pdf} $f_{h}(t)$, therefore the time stepsize $\Delta t$ has to satisfy the CFL condition for diffusion equations in order to preserve the $L^{2}$-stability. The second and the fourth row are implicit for particle \textit{pdf} $f_{h}(t)$, therefore unconditionally $L^{2}$-stable. Nevertheless, all of them relies on the positivity of plasmon \textit{pdf} $N_{h}$. And in what follows we will prove that $N^{s}_{h}$ remains positive as long as the time stepsize $\Delta t$ is small enough. For details, we refer the readers to \cite{huang2023conservative}.

\begin{theorem}
    Let $f^{\nu}_{h}$ and $N^{\nu}_{h}$ be the solutions of the fully discrete equation, with explicit Euler scheme.

    If $\lVert f_{h}^{\nu}\rVert_{L^{2}(\Omega_{px}^{L})}\leq\lVert f_{h}^{0}\rVert_{L^{2}(\Omega_{px}^{L})}$ for any $\nu$, then there exists
    \begin{equation*}
        \Delta t_{M}\coloneqq C\frac{1-\delta}{\lVert f_{h}^{0}\rVert_{L^{2}(\Omega_{px}^{L})}},
    \end{equation*}
    such that
    \begin{equation*}
        \iint_{R_{kx}}N_{h}^{\nu+1}\geq\delta \cdot \iint_{R_{kx}}N_{h}^{\nu}.
    \end{equation*}
    for any trajectory bundle $R_{kx}$, as long as $\Delta t < \Delta t_{M}$.
\end{theorem}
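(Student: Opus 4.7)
The plan is to induct on $\nu$, exploiting the fact that the plasmon basis $\{\eta_R = \mathbf{1}_R\}$ is $L^2$-orthogonal with $(\eta_R,\eta_R)_{kx} = \mu(R)$. Writing $N_h^\nu = \sum_R N_R^\nu \, \eta_R$ and testing the explicit-Euler update against $\eta_h = \eta_{R_{kx}}$ for a single bundle, the orthogonality collapses the left-hand side to $\mu(R_{kx})(N_{R_{kx}}^{\nu+1}-N_{R_{kx}}^{\nu})/\Delta t$. On the right-hand side the same orthogonality, combined with $\eta_h \cdot N_h^\nu = N_{R_{kx}}^\nu \mathbf{1}_{R_{kx}}$ pointwise, factors out $N_{R_{kx}}^\nu$ and leaves a functional of $f_h^\nu$ only:
\begin{equation*}
\mu(R_{kx})\bigl(N_{R_{kx}}^{\nu+1}-N_{R_{kx}}^{\nu}\bigr) \;=\; \Delta t \, N_{R_{kx}}^\nu \, I_{R_{kx}}^\nu, \qquad I_{R_{kx}}^\nu \coloneqq \iiint_{pkx} \mathbf{1}_{R_{kx}} \mathcal{B}_\varepsilon (\mathcal{L}_h E_h)(\mathcal{L}_h f_h^\nu).
\end{equation*}
Under the induction hypothesis $N_{R_{kx}}^\nu \geq 0$, the desired $\iint_{R_{kx}} N_h^{\nu+1} \geq \delta \iint_{R_{kx}} N_h^\nu$ becomes $\Delta t \, I_{R_{kx}}^\nu \geq -(1-\delta)\mu(R_{kx})$, which is automatic when $I_{R_{kx}}^\nu \geq 0$ and otherwise equivalent to $\Delta t \leq (1-\delta)\mu(R_{kx})/|I_{R_{kx}}^\nu|$.

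The core of the argument is then a uniform bound on $|I_{R_{kx}}^\nu|/\mu(R_{kx})$. I would apply Cauchy--Schwarz in $(p,k,x)$ with weight $\mathbf{1}_{R_{kx}} \mathcal{B}_\varepsilon$, splitting $I_{R_{kx}}^\nu$ into a data-only factor $\bigl(\iiint \mathbf{1}_{R_{kx}} \mathcal{B}_\varepsilon |\mathcal{L}_h E_h|^2\bigr)^{1/2}$ and a dynamic factor $\bigl(\iiint \mathbf{1}_{R_{kx}} \mathcal{B}_\varepsilon |\mathcal{L}_h f_h^\nu|^2\bigr)^{1/2}$. Since $\mathcal{B}_\varepsilon$ and the coefficients $k_{\parallel,h}$, $\omega_h$, $\partial_{p,h} E_h$, $p/p_\perp$ are all bounded on the cut-off domain $\Omega_{px}^L \times \Omega_{kx}^L$, the first factor is of order $\sqrt{\mu(R_{kx})}$. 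For the dynamic factor I would invoke the finite-element inverse inequality $\|\nabla_{p,h} f_h^\nu\|_{L^2(\Omega_{px}^L)} \leq C h^{-1} \|f_h^\nu\|_{L^2(\Omega_{px}^L)}$ on the broken polynomial space $\mathcal{G}_h$, together with the $k$-independence of $\nabla_{p,h} f_h^\nu$, to pull one factor of $\mu(R_{kx})$ out of the $(k,x)$-integration and get a bound of order $\sqrt{\mu(R_{kx})} \, h^{-1} \|f_h^\nu\|_{L^2}$. Dividing by $\mu(R_{kx})$ and invoking the hypothesis $\|f_h^\nu\|_{L^2} \leq \|f_h^0\|_{L^2}$ yields $|I_{R_{kx}}^\nu|/\mu(R_{kx}) \leq C \|f_h^0\|_{L^2(\Omega_{px}^L)}$ uniformly in $R_{kx}$, producing the advertised $\Delta t_M = C(1-\delta)/\|f_h^0\|_{L^2}$.

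The main obstacle, I expect, is keeping the Cauchy--Schwarz estimate tight enough that the $\mathbf{1}_{R_{kx}}$ weight contributes exactly $\mu(R_{kx})$ rather than the full cut-off volume $|\Omega_{kx}^L|$; otherwise the bound degenerates for thin bundles and the allowable $\Delta t$ would have to scale with $\mu(R_{kx})$, breaking the uniform form of the theorem. The remedy is to integrate $(k,x)$ first, exploiting that $\nabla_{p,h} f_h^\nu$ depends only on $(p,x)$ while the weight $\mathbf{1}_{R_{kx}}\mathcal{B}_\varepsilon$ concentrates support in $(k,x)$, so that the $k$-slice of $R_{kx}$ is absorbed cleanly into $\mu(R_{kx})$. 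Once the uniform estimate is in hand, the induction closes in one line: positivity of $N_h^0$ together with the proved inequality gives $N_{R_{kx}}^{\nu+1} \geq \delta N_{R_{kx}}^\nu \geq 0$ in every bundle, so $N_h^{\nu+1} \geq 0$ pointwise and the argument propagates to all subsequent time steps.
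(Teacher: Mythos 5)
Your proposal is correct and follows essentially the same route as the paper: because the trajectory bundles are disjoint the plasmon mass matrix is diagonal, so the explicit Euler update is multiplicative, $N_{s}^{\nu+1}=N_{s}^{\nu}\bigl(1+\Delta t\, I_{s}^{\nu}/\mu(S_{s})\bigr)$, and the theorem reduces to a uniform bound $|I_{s}^{\nu}|/\mu(S_{s})\le C\,\lVert f_{h}^{0}\rVert_{L^{2}(\Omega_{px}^{L})}$. The paper simply asserts that bound from the $L^{2}$-stability of $f_{h}^{\nu}$, whereas you supply a Cauchy--Schwarz/inverse-inequality justification (including the per-bundle scaling needed to keep the constant independent of $\mu(S_{s})$) and make explicit the induction on $N_{h}^{\nu}\ge 0$ that the paper's final step tacitly assumes --- both worthwhile additions.
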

\begin{proof}
    Recall the second row of ODE system (\ref{eq_odesys_tensor}),
    \begin{equation*}
        \sum_{j}G_{js}\partial_{t}N_{j}=N_{s}\sum_{n}\sum_{m}a_{n}E_{m}B_{smn}.
    \end{equation*}
    By definition the mass matrix $G_{js}$ is diagonal, hence $G_{js}= \delta_{js} G_{ss}$, and it follows that
    \begin{equation*}
        G_{ss}\partial_{t}N_{s}=N_{s}\sum_{n}\sum_{m}a_{n}E_{m}B_{smn}.
    \end{equation*}
    With explicit Euler scheme, we have
    \begin{equation*}
        N_{s}^{\nu+1}=N_{s}^{\nu}\left(1+\frac{\Delta t}{G_{ss}}\cdot\sum_{n}\sum_{m}a^{\nu}_{n}E_{m}B_{smn}\right).
    \end{equation*}
    By the $L^2$-stability of particle \textit{pdf} $f^{\nu}_{h}$ we have
    \begin{equation*}
        \left\vert\frac{\Delta t}{G_{ss}}\cdot\sum_{n}\sum_{m}a_{n}^{\nu}E_{m}B_{smn}\right\vert\leq C_{1}\lVert a_{n}^{\nu}\rVert_{l^{2}}\Delta t\leq C_{2}\lVert f_{h}^{0}\rVert_{L^{2}(\Omega_{px}^{L})}\Delta t
    \end{equation*}
    Define the upper bound
    \begin{equation*}
        \Delta t_{M}=\frac{1}{C_{2}}\frac{1-\delta}{\lVert f_{h}^{0}\rVert_{L^{2}(\Omega_{px}^{L})}},
    \end{equation*}
    then
    \begin{equation*}
        N_{s}^{\nu+1} > \delta \cdot N_{s}^{\nu}
    \end{equation*}
    as long as $\Delta t < \Delta t_{M}$.
\end{proof}

\section{Numerical results}\label{sec:cylinder_results}
\subsection{Problem setting}
Analogous to \cite{huang2023conservative}, only anomalous Doppler resonance with $l=1$ is considered. For dispersion relation $\omega(\mathbf{k},\mathbf{x})$ we take the lowest branch. 

Set the cut-off computational domains as follows:
\begin{equation*}
    \begin{split}
        \Omega_{px}^{L}&=\left\{ \left(r,p_{\parallel},p_{\perp}\right):r\in\left(0,R_{max}\right),\ p_{\parallel}\in\left(10m_{e}c,25m_{e}c\right),\ p_{\perp}\in\left(0,15m_{e}c\right)\right\} \\
        \Omega_{kx}^{L}&=\left\{ \left(r,k_{r},q_{\phi},k_{z}\right):r\in\left(0,R_{max}\right),\ k_{r}\in\left(-\frac{\omega_{0}}{c},\frac{\omega_{0}}{c}\right),\ q_{\phi}\in\left(0,0.5 \frac{\omega_{0}R_{max}}{c}\right),\ k_{z}\in\left(0,\frac{\omega_{0}}{c}\right)\right\} 
    \end{split}
\end{equation*}

The domain $\Omega^{L}_{px}$ is partitioned into rectangular meshes, with $75$ meshes in $p_{\parallel}$ axis, $75$ meshes in $p_{\perp}$ axis,  and $20$ meshes in $r$ axis.

The domain $\Omega^{L}_{kx}$ is partitioned as follows. First of all, partition the $\phi z$-domain into $20\times40$ rectangular meshes. In each $\phi z$-mesh interpolate $\omega(\cdot, \cdot, q_{\phi}, k_{z})$ on $20\times20\times2$ triangular meshes, as illustrated in Figure \ref{fig:traj}. For each Hamiltonian $\omega(\cdot, \cdot, q_{\phi}, k_{z})$, we construct trajectory bundles generated by $10$ boxes with the connection-proportion algorithm.

Choose piecewise constant functions as the test/trial spaces:
\begin{equation*}
    \begin{split}
        \mathcal{G}_{h}&=\left\{ \varphi\left(r,p_{\parallel},p_{\perp}\right):\varphi|_{R_{p}\times R_{x}}\in Q^{0}(R_{p})\times Q^{0}(R_{x}),\forall~R_{p},R_{x}\right\} ,\\
        \mathcal{W}_{h}&=\left\{ \eta\left(r,k_{r},q_{\phi},k_{z}\right):\eta=\sum_{\widetilde{S}}n_{\widetilde{S}}\mathbf{1}_{\widetilde{S}}\right\} .
    \end{split}
\end{equation*}

We choose the following projection operator $\Pi_{kx,h}$:
\begin{equation*}
    \Pi_{kx,h}G(\mathbf{k},\mathbf{x})=\sum_{\widetilde{S}}\left(\sup_{(\mathbf{k},\mathbf{x})\in\widetilde{S}}G(\mathbf{k},\mathbf{x})\right)\cdot1_{\widetilde{S}}.
\end{equation*}

Meanwhile, we define projection $\Pi_{p,h}$ as 
\begin{equation*}
    \Pi_{p,h}G(p_{\parallel},p_{\perp})\coloneqq G\left(p_{\parallel}^{i-1/2},p_{\perp}^{j-1/2}\right),\ \forall(p_{\parallel},p_{\perp})\in R_{p}^{i,j}.
\end{equation*}

Consider a magnetized plasma with non-uniform electron density:
\begin{equation*}
    n_{e}(r)=n_{0}\left[1-\left(\frac{r}{R_{max}}\right)^{2}\right],
\end{equation*}
embedded in a uniform external magnetic field
\begin{equation*}
    \mathbf{B}(r)=B_{0}\mathbf{e}_{z}.
\end{equation*}

Analogous to \cite{huang2023conservative}, we only compute the ``bump" part of electron \textit{pdf} $f(\mathbf{p},\mathbf{x},t)$, which takes the following initial configuration:
\begin{equation*}
    \left.f(p_{\parallel},p_{\perp},r)\right\vert _{t=0}=\left[10^{-5}\frac{1}{\sqrt{\pi}}\exp\left(-\left(\frac{p_{\parallel}}{m_{e}c}-20\right)^{2}-\left(\frac{p_{\perp}}{m_{e}c}\right)^{2}\right)\right]\frac{n_{0}}{m_{e}^{3}c^{3}}\left(\frac{1}{R_{max}}\right)^{3}.
\end{equation*}

Meanwhile, the plasmon \textit{pdf} $N(\mathbf{k},\mathbf{x},t)$ is initialized as follows:
\begin{equation*}
    \left.N\left(r,k_{r},q_{\phi},k_{z}\right)\right\vert _{t=0}=10^{-5}\frac{n_{0}mc^{2}}{(\omega_{0}/c)^{3}}\frac{1}{\hbar\omega_{0}}\left(\frac{1}{R_{max}}\right)^{3}.
\end{equation*}


\subsection{Trajectory bundles}
As shown in Figure \ref{fig:traj}, the triangular partition of the $(r,k_{r})$-domain is done by dividing every rectangular mesh into two. The connection-proportion algorithm successfully distinguishes different trajectory bundles in the inverse image of the same box(in this case, interval).

\begin{figure}
    \centering
    \begin{subfigure}[b]{0.45\textwidth}
        \centering
        \includegraphics[width=\textwidth]{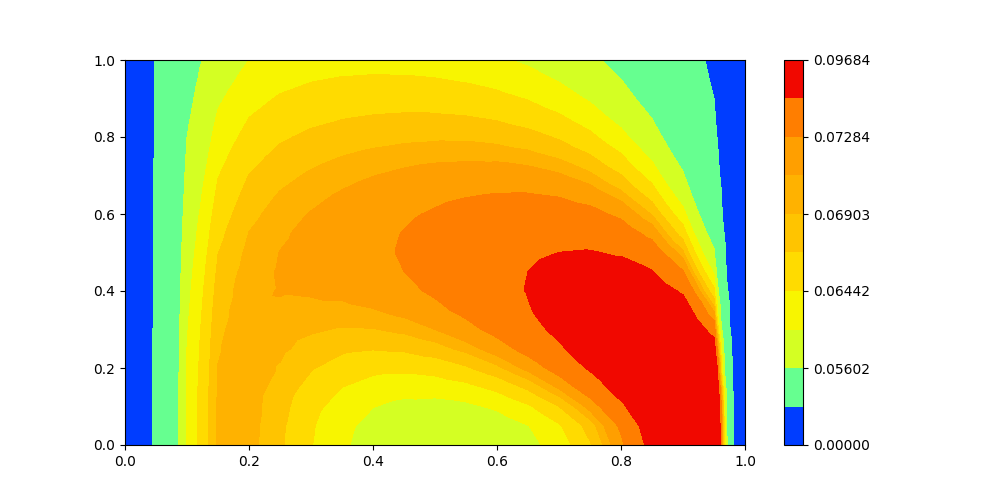}
        \label{fig_Hamiltonian}
        \caption{$\omega(r,k_{r};q_{\phi}, k_{z})/\omega_{0}$ at $(q_{\phi}, k_{z}) = (\frac{9\omega_{0}}{80c}, \frac{9\omega_{0}R_{max}}{80c})$}
    \end{subfigure}
    \hfill
    \begin{subfigure}[b]{0.45\textwidth}
        \centering
        \includegraphics[width=\textwidth]{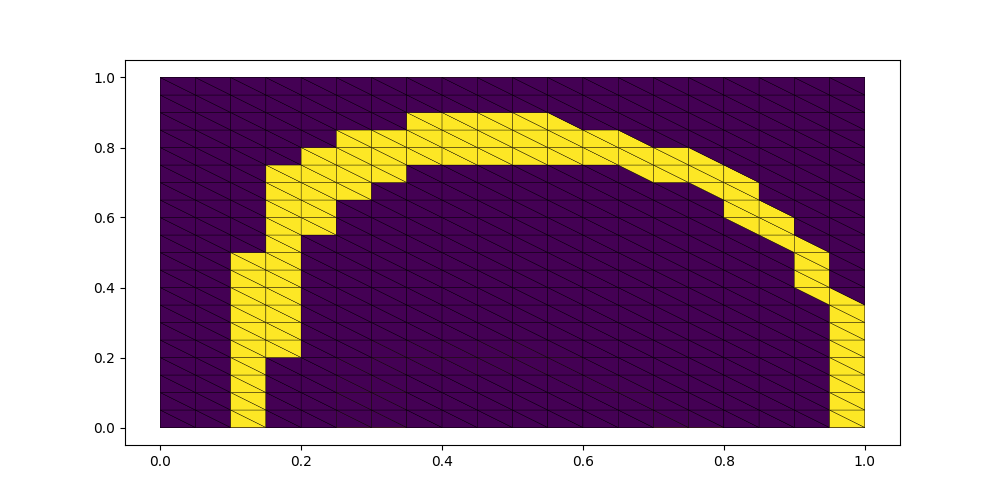}
        \caption{The minimal triangle cover of $S_{11}$}
    \end{subfigure}
    \vskip\baselineskip
    \begin{subfigure}[b]{0.45\textwidth}
        \centering
        \includegraphics[width=\textwidth]{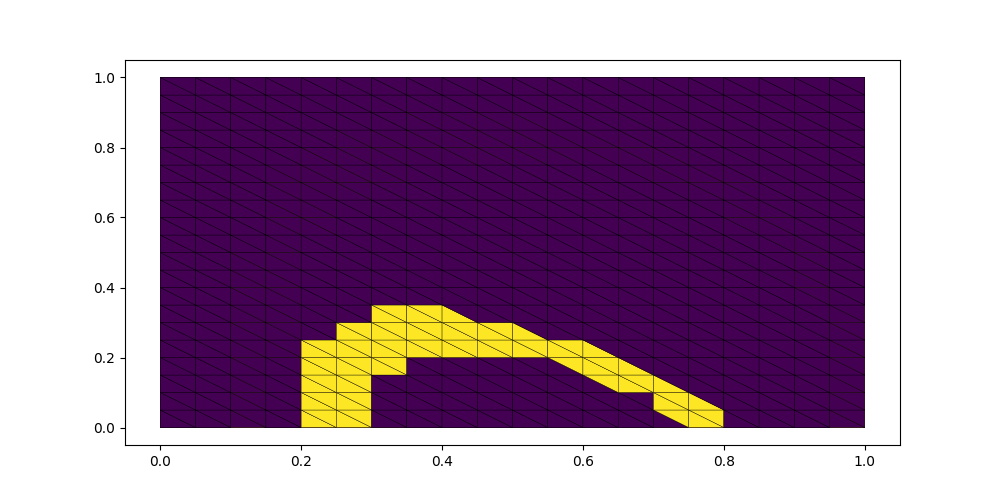}
        \caption{The minimal triangle cover of $S_{12}$}
    \end{subfigure}
    \hfill
    \begin{subfigure}[b]{0.45\textwidth}
        \centering
        \includegraphics[width=\textwidth]{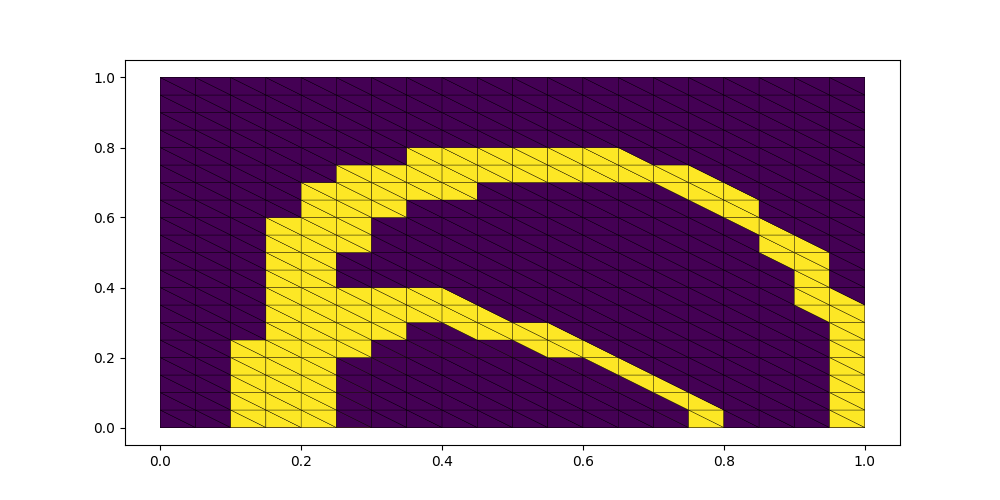}
        \caption{The minimal triangle cover of $S_{13}$}
    \end{subfigure}
    \caption[Trajectory bundles and their minimal triangle covers]{Trajectory bundles and their minimal triangle covers. The $x$-axis represents $r/R_{max}$, and the $y$-axis represents $k_{r}c/\omega_{0}$.  Since the Hamiltonian is symmetric for $\pm k_{r}$, we only plot half of the domain. Note that $S_{11}$ and $S_{12}$ are two trajectory bundles generated by the same Hamiltonian range interval. And $S_{13}$ is not a single strip because it contains a saddle point.}
    \label{fig:traj}
\end{figure}

\subsection{Temporal evolution and spatial inhomogeneity}
In Figure \ref{fig:cylind_evol} we show the evolution of electron \textit{pdf} $f(p_{\parallel},p_{\perp},r, t)$ at $r=\frac{31R_{max}}{40}$. In Figure \ref{fig:cylind_spatial_f} the electron \textit{pdf} at the same time point $t=3\times 10^{6} \frac{1}{2\pi \omega_{0}}$ in different positions is presented. 

\begin{figure}[htbp!]
    \centering
    \begin{subfigure}[b]{\textwidth}
        \centering
        \includegraphics[width=0.8\textwidth]{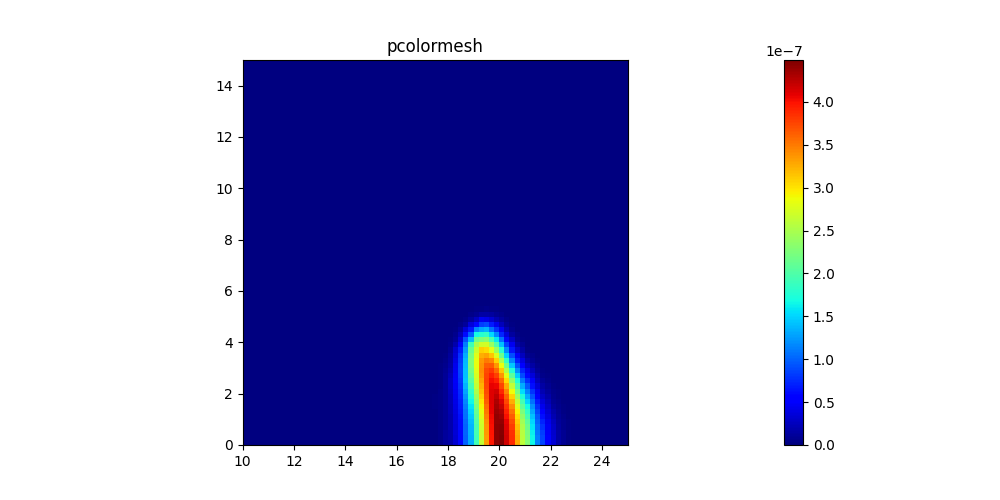}
    \end{subfigure}
    \begin{subfigure}[b]{\textwidth}
        \centering
        \includegraphics[width=0.8\textwidth]{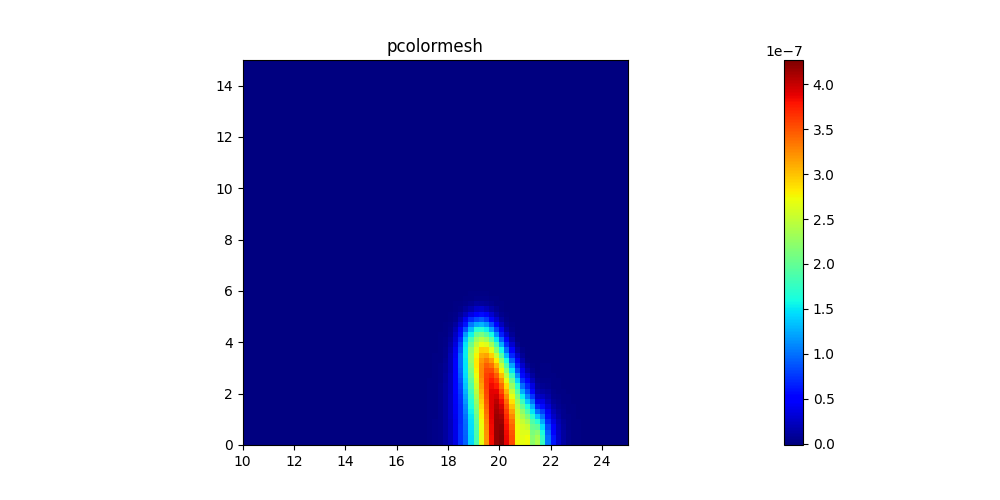}
    \end{subfigure}
    \begin{subfigure}[b]{\textwidth}
        \centering
        \includegraphics[width=0.8\textwidth]{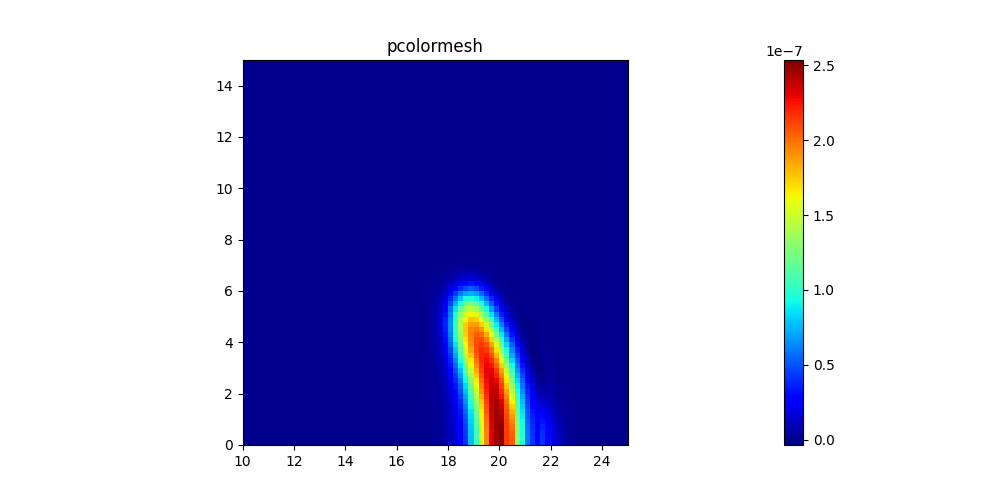}
    \end{subfigure}
    \caption{Spatial inhomogeneity}
    \label{fig:cylind_spatial_f}
\end{figure}

\begin{figure}[htbp!]
    \centering
    \begin{subfigure}[b]{\textwidth}
        \centering
        \includegraphics[width=0.8\textwidth]{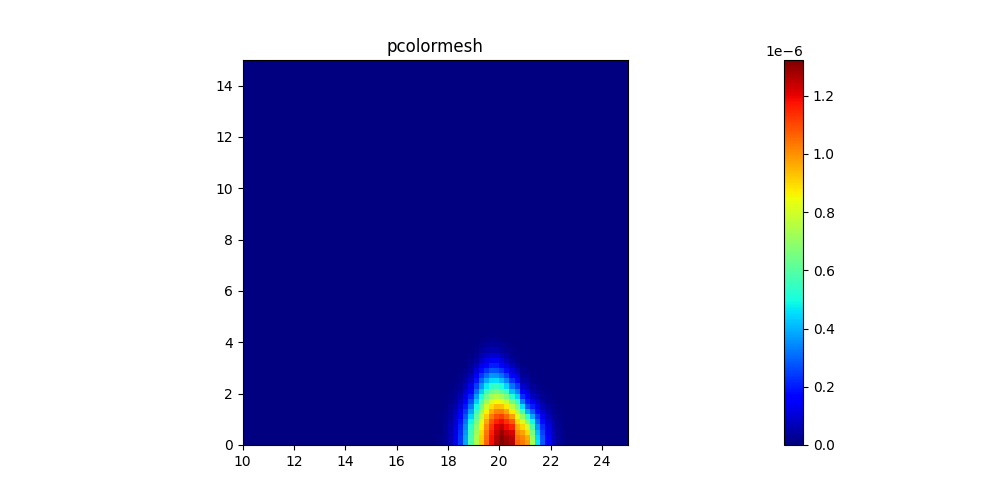}
    \end{subfigure}
    \begin{subfigure}[b]{\textwidth}
        \centering
        \includegraphics[width=0.8\textwidth]{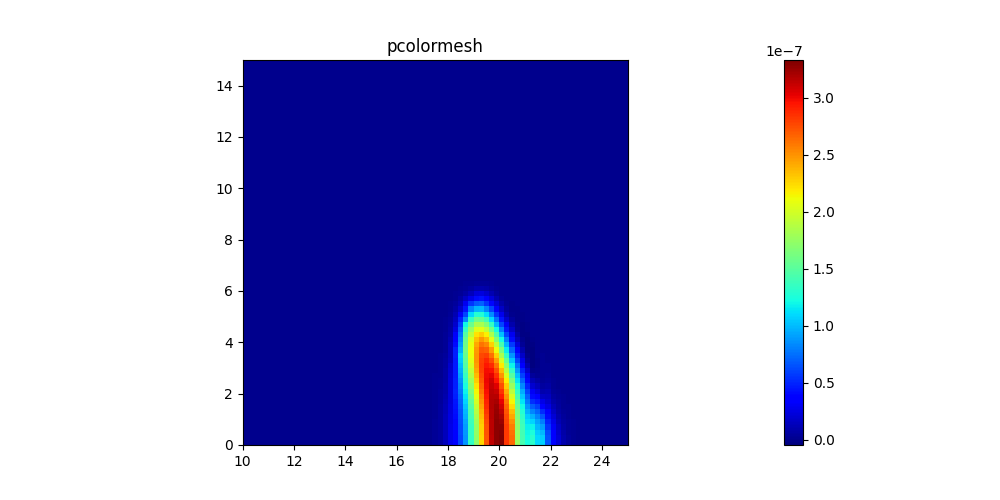}
    \end{subfigure}
    \begin{subfigure}[b]{\textwidth}
        \centering
        \includegraphics[width=0.8\textwidth]{Fig/density/r15time3e6.png}
    \end{subfigure}
    \caption{Temporal evolution.}
    \label{fig:cylind_evol}
\end{figure}

\subsection{Conservation verification}
For the evolution of the electron-plasmon system momentum and energy, see Figure(\ref{fig:conservation}).

With $T_{max}=3.86\times10^{6}\frac{1}{2\pi\omega_{0}}$, we have the following relative errors:
\begin{equation*}
    \begin{split}
        e_{rel}(\mathcal{M}_{tot,h})&=4.6\times 10^{-16},\\
        e_{rel}(\mathcal{P}_{\parallel,tot,h})&=1.8\times 10^{-14},\\
        e_{rel}(\mathcal{E}_{tot,h})&= 5.5 \times 10^{-16}.
    \end{split}
\end{equation*}

\begin{figure}[htbp!]
    \centering
    \begin{subfigure}[b]{0.4\textwidth}
        \centering
        \includegraphics[width=\textwidth]{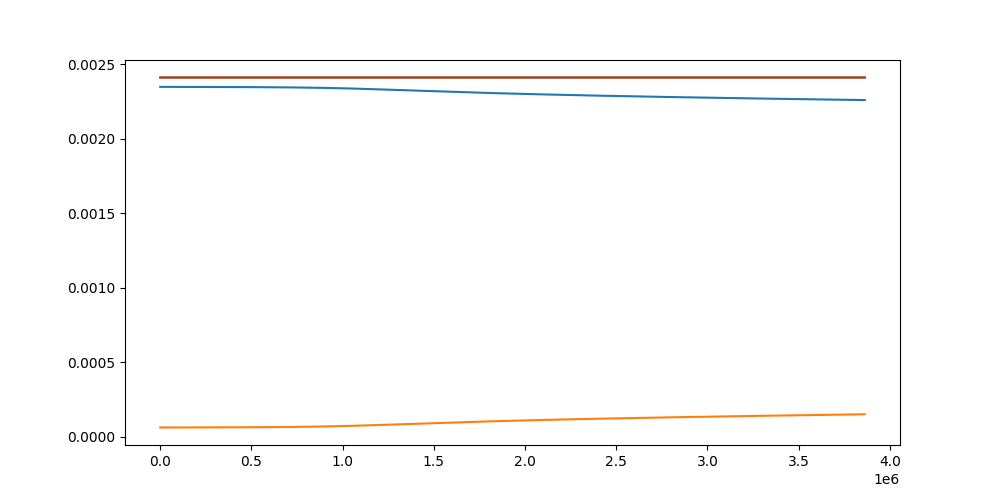}
        \caption{Momentum evolution.}
    \end{subfigure}
    \begin{subfigure}[b]{0.4\textwidth}
        \centering
        \includegraphics[width=\textwidth]{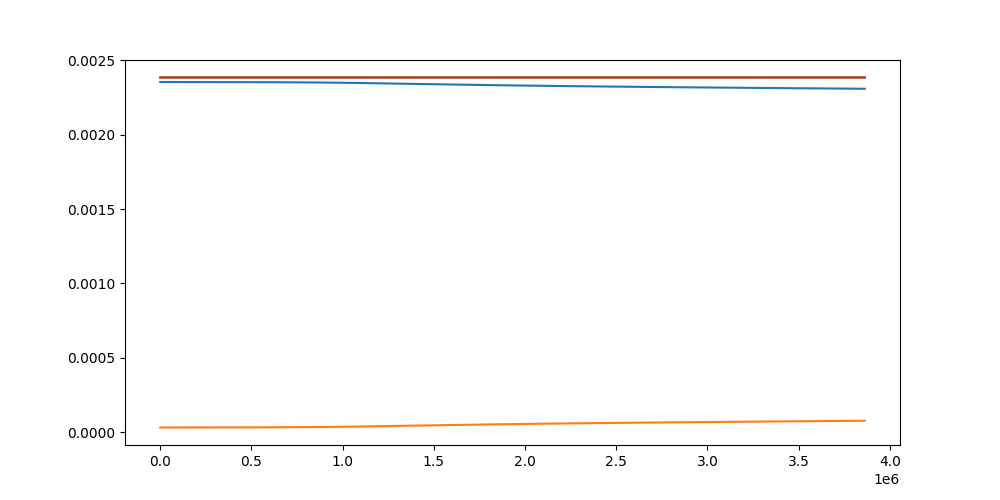}
        \caption{Energy evolution.}
    \end{subfigure}
    \caption{Conservation laws}
    \label{fig:conservation}
\end{figure}

\section{Summary}\label{sec:summary}
In this paper, we introduced a structure-preserving solver for particle-wave interaction in a cylinder with radially non-uniform plasma density. We preserve all the conservation laws during ``collision" by adopting the unconditionally conservative weak form. For the fast periodic advection of plasmons in the phase space, we proposed a novel averaging method, which significantly reduces the computational cost without violating the Hamiltonian structure. All these properties have been verified by numerical experiments, where we observe different diffusion rate in different positions.

In the future, we plan to extend the solver for more sophisticated models, enabling the full-scale simulation of runaway electrons.

\section*{Acknowledgment}
The authors thank and gratefully
acknowledge the support from the Oden Institute of Computational
Engineering and Sciences and the Institute for Fusion Studies at the University of Texas at Austin. This project 
was supported by funding from NSF DMS: 2009736, NSF grant
DMS-2309249 and DOE DE-SC0016283
project Simulation Center for Runaway Electron Avoidance and Mitigation.

\bibliographystyle{plain}
\bibliography{main.bib}
\end{document}